\def \R {{\mathbb R}}
\def \bx {\bar\xi}
\def \eqdef {\doteq}
\newcommand {\beqn} {\begin{equation*}}
	\newcommand {\eeqn}	{\end{equation*}}
\newcommand {\beq} {\begin{equation}}
	\newcommand {\eeq}	{\end{equation}}
\newtheorem{theorem}{Theorem}
\newtheorem{lemma}[theorem]{Lemma}
\newtheorem{definition}[theorem]{Definition}
\newtheorem{remark}[theorem]{Remark}
\title{Exploration of billiards with Keplerian potential}
\author{Vivina L. Barutello, Anna Maria Cherubini, Irene De Blasi}
 \thanks{\noindent Work partially supported by INdAM groups G.N.A.M.P.A. and G.N.F.M.\\
Supported by Italian Research Center on High Performance Computing Big Data and Quantum
Computing (ICSC), project funded by European Union - NextGenerationEU - and National
Recovery and Resilience Plan (NRRP) - Mission 4 Component 2 within the activities of Spoke 3
(Astrophysics and Cosmos Observations)}
\keywords{Billiards, refraction, reflection, Kepler problem, symbolic dynamics,  topological chaos.}
\subjclass[2020] {
	34C28, 
	37B10, 
	70F15, 
	37C83, 
	37C70 
}
\begin{document}
	
\maketitle
\begin{abstract}
We study  a class of elliptic billiards with a Keplerian potential inside, considering two cases: a reflective one, where the particle reflects elastically on the boundary, and a 
refractive one, where the particle can cross the billiard's boundary entering a region 
with a harmonic potential.  In the latter case the dynamics is therefore given by  concatenations of inner and outer arcs, connected by a refraction law. In recent papers (e.g.  \cite{deblasiterraciniellissi, IreneSusNew, IreneSusViNEW, takeuchi2021conformal}) these billiards have been extensively studied in order to identify which conditions give rise to either regular or chaotic dynamics. In this paper we complete the study by analysing  the non focused reflective case, thus complementing the results obtained in \cite{takeuchi2021conformal}  in the focused one. We then analyse the focused and non focused refractive case, where no results on integrability are known except for the centred circular case, by providing an extensive numerical analysis. 
We present also a theoretical result regarding the linear stability of homothetic equilibrium orbits in the reflective case for general ellipses, highlighting the possible presence of bifurcations even in the integrable framework.
\end{abstract}

\section{Introduction}\label{sec:intro}
Billiards with potentials can be considered a generalisation of classical Birkhoff billiards, and in the last decades they have attracted the attention of a wider and wider community of mathematicians (see \cite{jacobi1866,boltzmann1868,wojciechowski1985integrable,korsch1991new, kozlov1991billiards,Tabbook,Bol2017,pustovoitov2021topological}). As a matter of fact, many results that hold for the classical case still persist when inside the billiard a non-constant potential is considered; on the other hand, a new phenomenology, sometimes unexpected, can appear. 
This is the case discussed in \cite{IreneSusViNEW}, where a particular class of billiards with a Keplerian potential is proved to be chaotic when the domain is a  {\it centred} ellipse (actually the results holds in a much more general class of planar domains);
on the contrary, when we deal with classical Birkhoff billiards, it is well known that  {\it any} elliptic boundary leads to an integrable dynamics (see \cite{birkhoff1927acta}). 

Within all possible non-constant potentials, we consider the purely Keplerian one, generated by a mass located at the origin, which is in the interior of a bounded region of the plane. 
To be more precise, let $D\subset\R^2$ be a bounded set such that $(0,0) \in \mathring{D}$ and consider an inner potential given by
\begin{equation*}\label{eq:potinterno}
	V_I(z) = h_I +\frac{\mu}{|z|}, \quad  z \in D,
\end{equation*}
where $\mu>0$ is the mass parameter of a Keplerian centre located at the origin. The quantity $h_I$, which represents the total inner energy, can take in principle any real value; nevertheless, since the results in \cite{IreneSusViNEW} hold for high values of the inner energy, we restrict to the case $h_I>0$.
Hence, the motion inside $D$ consists in branches of Keplerian hyperbol\ae\, which, in the models considered, interact with the boundary $\partial D$ in two different ways: the first possibility is an elastic reflection (see Figure \ref{fig:rifl-rifr}, left). In this case the tangent component of the velocity is kept constant, while the normal one changes in sign. This first case corresponds to the well known \emph{reflective Keplerian billiard} (or Kepler billiard, see for instance \cite{takeuchi2021conformal}, and for a more general treatment of this kind of models \cite{bolotin2000periodic,Bol2017,shilnikov1967}). 
\begin{figure}
 \centering
\includegraphics[width=0.4\linewidth]{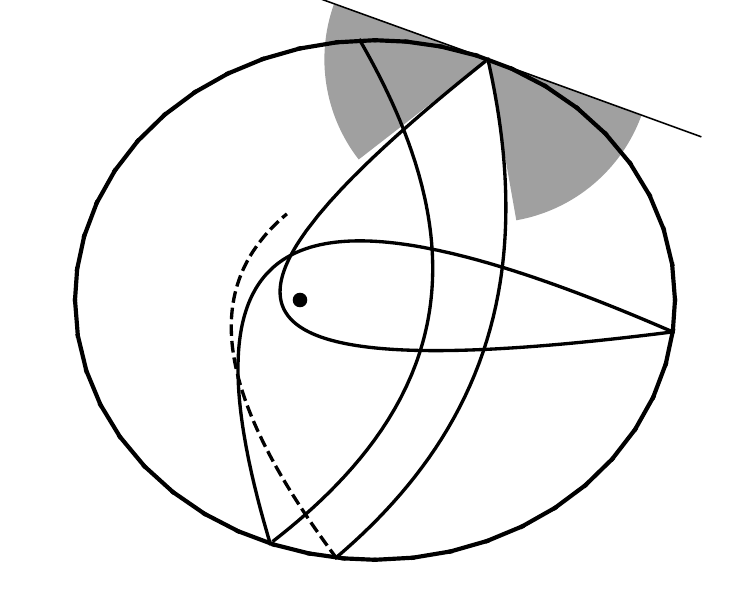} \qquad 
\begin{overpic}[width=0.4\linewidth]{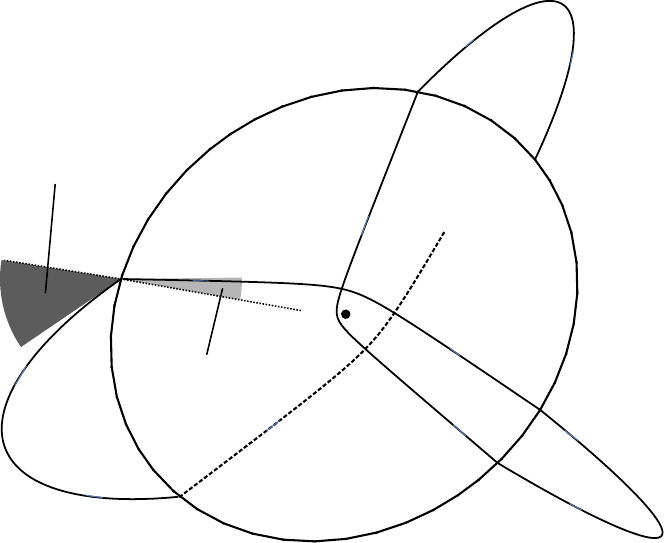}
	\put (7,55) {$\alpha_E$}
	\put (28,25) {$\alpha_I$}
\end{overpic}
	\caption{In the left picture a Keplerian reflective billiard; on the right, a refractive billiard as described in \cite{IreneSusViNEW}, where the angles $\alpha_E$ and $\alpha_I$ refer to Eq. \ref{eq:snell}.}
	\label{fig:rifl-rifr}
\end{figure}

The second model we consider is more complex. The particle can exit the domain entering in the exterior region, where it is subjected to a Hooke force induced by the potential 
\begin{equation}
	V_E(z)=h_E-\frac{\omega^2}{2}|z|^2, \quad z\notin D, 
\end{equation}
where $h_E, \omega>0$. The trajectories described outside $D$ are then harmonic ellipses and the transition inside-outside or outside-inside is governed by the refraction Snell's law 
\begin{equation}\label{eq:snell}
	\sqrt{V_I(z)}\sin\alpha_I=\sqrt{V_E(z)}	\sin\alpha_E, 
\end{equation} 
where $z\in\partial D$ is the transition point, and $\alpha_E, \alpha_I$ are the angles of respectively the outer and inner arc with the normal vector to $\partial D$ in $z$ (see Figure \ref{fig:rifl-rifr}, right). We will refer to this case as the {\it refractive Keplerian} one. Such model  is justified by physical reasons: the system so constructed can be used to describe the motion of a particle inside an elliptic galaxy with a central mass (see for instance \cite{Delis20152448,deblasiterraciniellissi,IreneSusNew}). While in the reflective case, we took $h_I>0$, here we will focus on the case $h_E<h_I$: in \cite{IreneSusNew}, we indeed noticed an interesting variety of phenomena for $h_I\gg h_E$. Of course, a more general analysis with real values of $h_E, h_I$ is possible.   \\

The present work has to be intended as a complement of the analysis carried on in \cite{IreneSusViNEW}, inspired by the interaction of the authors with colleagues in the field, who raised very interesting open questions about the models. For this reason, let us now summarise, for general domains, hypotheses and main result of \cite{IreneSusViNEW}.
Assume that the bounded domain $D$ containing the origin has a simple $C^2$ boundary parameterised by $\gamma = \gamma(\xi)$, $\xi \in [a,b]$, with $\gamma(a) = \gamma(b)$.   

\begin{definition}\label{def:cc_intro}
	A \emph{central configuration} (in short, c.c.) is a value of the parameter $\bar \xi \in [a,b]$ such that 
	\begin{itemize}
		\item $\overline P=\gamma(\bar \xi)$ is a constrained critical point for $|\cdot|_{|_{\partial D}}$, that is, the position vector $\overline P$ is orthogonal to the boundary $\partial D$ at $\overline P$;
		\item the half-line connecting the origin to $\overline P$ intersects $\partial D$ only at $\overline P$.
	\end{itemize}
	A central configuration $\bar\xi$ is termed \emph{non degenerate} if the second differential of the function $|\cdot|_{|_{\partial D}}$ is not degenerate at $\overline P$.\\
	The domain $D$ is termed \emph{admissible} if there exist at least two non degenerate\footnote{Let us highlight that, for simplicity, the definition of admissible domain is slightly stronger than in \cite{IreneSusViNEW}. } central configurations corresponding to a pair of points on $\partial D$ which are not aligned with the origin (i.e. that are \emph{not antipodal}).
\end{definition}
Central configurations are of fundamental importance for the dynamics in both reflective and refractive case. Along the direction of $\bar P$, one has indeed equilibrium trajectories for the systems, called \emph{homothetic motions}: they reflect on the mass on one side (through a regularisation process, see \cite{Levi-Civita}), and go back and forth along the half-line identified by $\overline{0P}$.   \\
{In Figure \ref{fig:zoo} we propose some examples of different domain's shapes, starting from a circle considering increasingly complex figures. In every quadrant, it is highlighted either with dotted lines or coloured areas the regions where, putting the mass $\mu$, one obtains non-admissible domains\footnote{{We stress that it is completely equivalent to fix the mass at the origin and moving the domain (keeping the origin in its interior) or to fix the domain and moving the inner mass. In the first case the potential remains the same and the domain changes; in the second one we have to change the potential. For computations, the first point of view is simpler, while to visualise the problem (as in Figure \ref{fig:zoo}) the second one results more convenient.}}: it is easily verified that, moving the origin in those points, one has only two antipodal central configurations or, alternatively, only degenerate ones. We highlight that not all the domains in Figure \ref{fig:zoo} have a $C^2$ boundary: in any case, the notion of admissibility can be extended to billiards with piecewise $C^2$ boundaries.}    \\

\begin{figure}
\centering
\begin{overpic}[height=0.15\textheight]{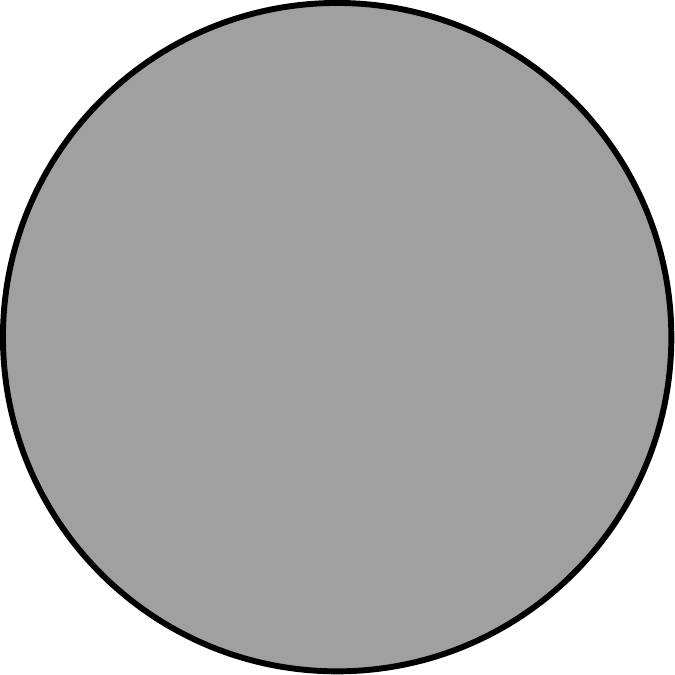}
	\put(0,95){(A)}
\end{overpic}\quad
\begin{overpic}[height=0.15\textheight]{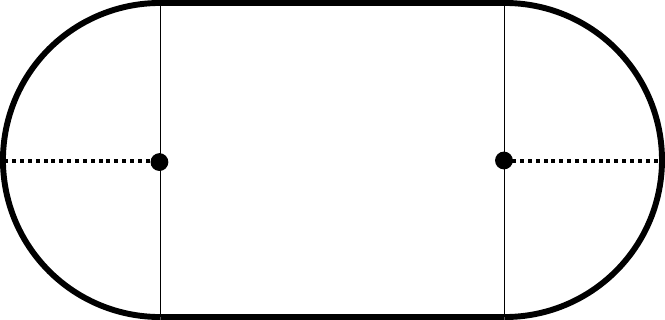}
	\put(0,46){(B)}
\end{overpic}\quad
\begin{overpic}[height=0.15\textheight]{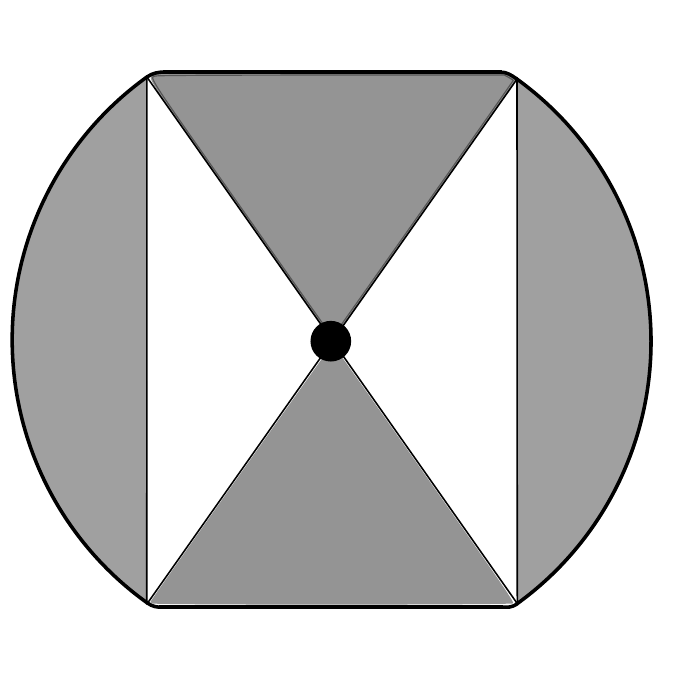}
	\put(0,95){(C)}
\end{overpic}\\
\quad\\
\begin{overpic}[height=0.15\textheight]{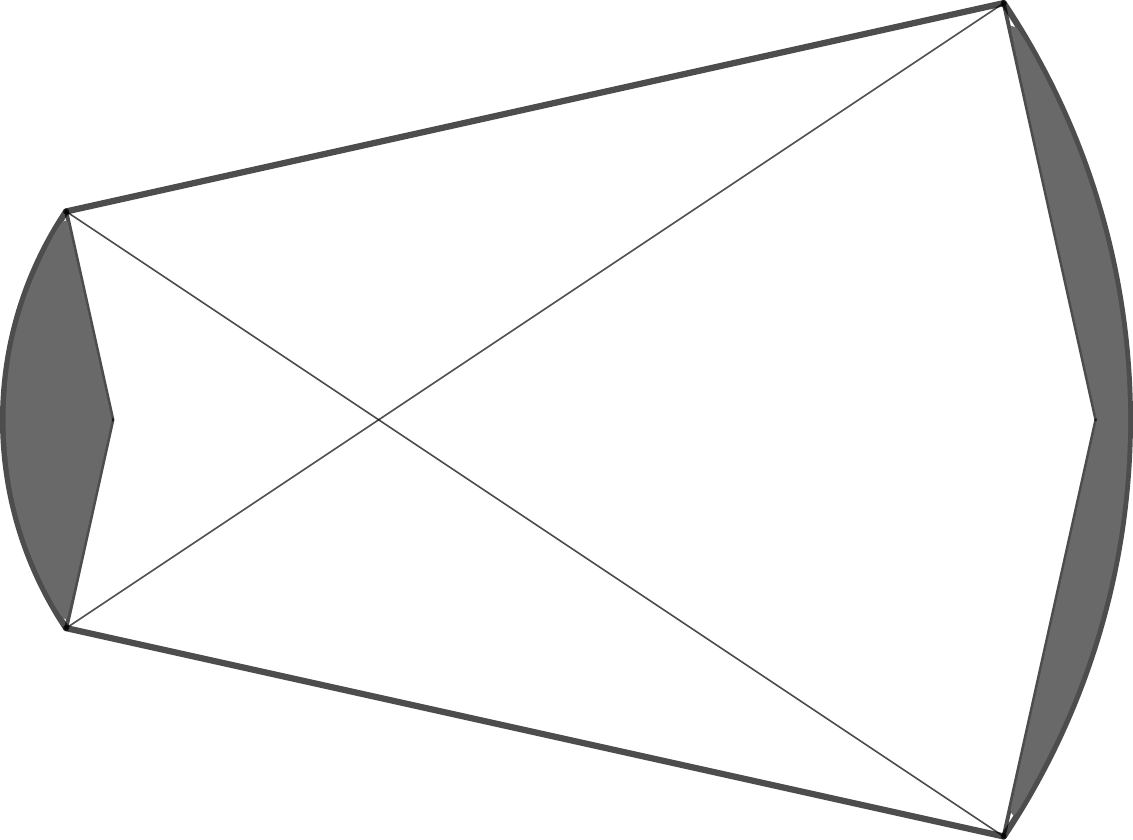}
	\put(0,70){(D)}
\end{overpic}\qquad
\begin{overpic}[height=0.15\textheight]{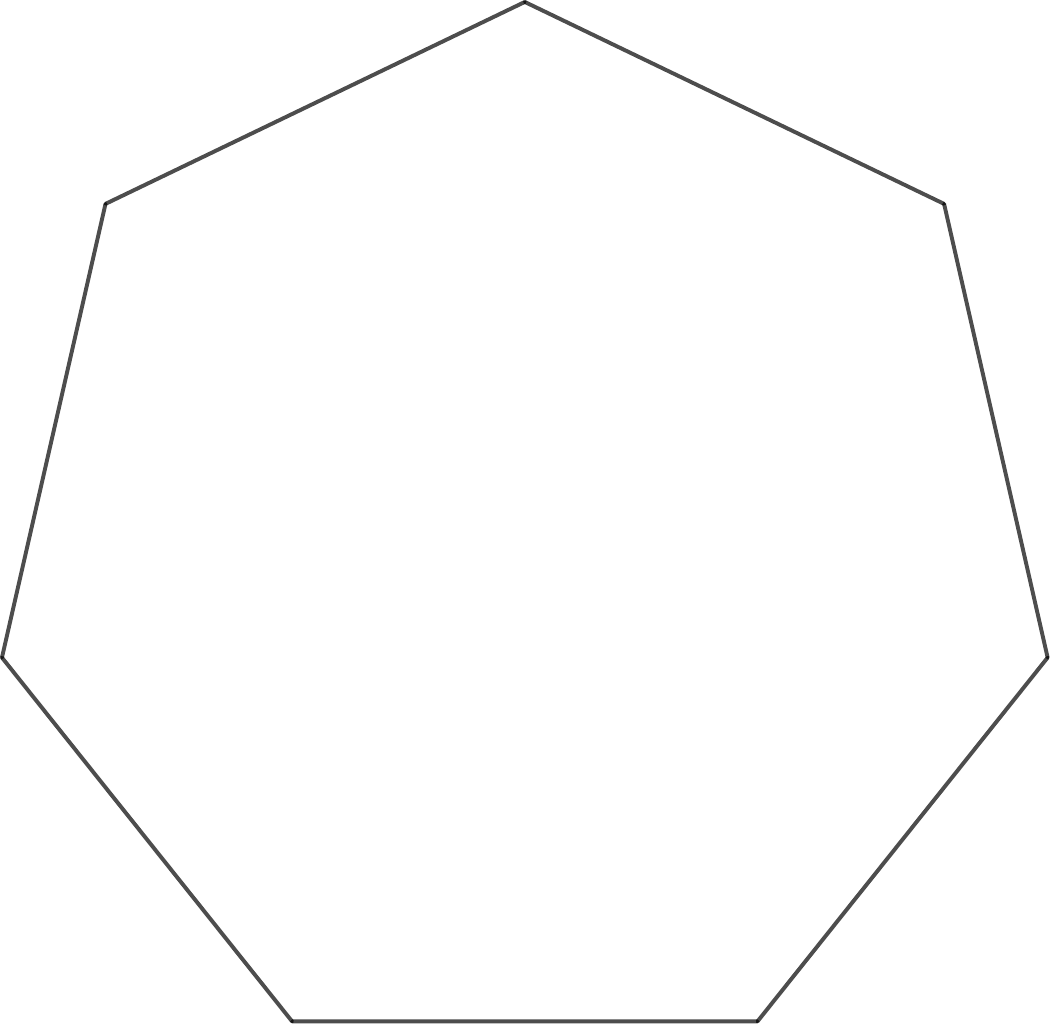} 
\put(0,93){(E)}
\end{overpic}\qquad
\begin{overpic}[height=0.15\textheight]{Figure/zoopolPari}
	\put(0,83){(F)}
\end{overpic}
\caption{Examples of domain and possible positions of the Keplerian mass leading to non-admissible domains (dotted lines or coloured regions). (A) Circle: if the circle is centred at the origin, every point of the boundary is a degenerate c.c.; if the centre is in any other point there are two non-degenerate but antipodal c.c. The circle is then never admissible. (B) Stadium-shape domain, where two half-circles are connected by a rectangle. Here, the position of the origin for which we have non-admissibility reduces to the two dotted lines (this case is very similar to the elliptic one, see Theorem \ref{thm:primo_teorema}).(C) Strip of circle: the two circular arcs have the same radius and centre. If the centre of the circle is at the origin we have two antipodal c.c. and indefinitely many degenerate ones; when $\mu$ is located in the grey region, there are just two antipodal c.c. (D) Trapezoidal shape: the circular edges have same centre but different radii. The segments bounding the grey regions are orthogonal to the straight edges. (E) Regular polygon with an odd number of vertices: these domains are always admissible. (F) Regular polygon with an even number of vertices: there is always a non-zero measure region where the mass can not be placed to have admissibility.}
\label{fig:zoo}
\end{figure}

\begin{theorem}[\cite{IreneSusViNEW}, Theorem 1.4]\label{thm:main_lavoro}
	Let $D$ be an admissible domain. Then, if $h_I$ is large enough, both the reflective and refractive case admit a topologically chaotic subsystem. 
\end{theorem}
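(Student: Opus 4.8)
The plan is to construct a symbolic dynamics built on the two non-degenerate, non-antipodal central configurations furnished by admissibility, and to prove that every bi-infinite itinerary in these two symbols is realised by a genuine trajectory. Write $\bar P_1 = \gamma(\bar\xi_1)$ and $\bar P_2 = \gamma(\bar\xi_2)$ for the two central configurations and $\ell_1, \ell_2$ for the half-lines $\overline{0\bar P_j}$ carrying the associated homothetic motions. These homothetic orbits are the skeleton of the whole construction: each one is an equilibrium-type solution that, after the regularised collision with the Keplerian centre, retraces its own path and so is naturally read as a letter of the alphabet $\{1,2\}$. Because the two configurations are \emph{not} antipodal, $\ell_1$ and $\ell_2$ are genuinely distinct directions through the origin, which is exactly what makes a two-letter alphabet meaningful; had they been aligned the skeleton would collapse to a single line and no chaotic structure could be extracted.

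I would first pass to Levi-Civita regularised variables so that the collision with the origin becomes a regular event, and rewrite the inner dynamics on the fixed energy level $h_I$ through the Jacobi--Maupertuis metric, turning the Keplerian branches into geodesics of a conformal metric with an isolated singularity at the origin. In the refractive case the passage through $\partial D$ is encoded by Snell's law \eqref{eq:snell}, so that an orbit is a concatenation of inner Keplerian geodesic arcs and outer harmonic arcs matched at the boundary; the energies $h_I, h_E$ and the shape of $\partial D$ enter only through this matching, so that the combinatorial skeleton is the same in the reflective and refractive settings and the two cases can be treated in parallel.

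The core of the argument is the large-$h_I$ analysis of arcs that leave a neighbourhood of one homothetic line, pass close to the origin, and return to $\partial D$. As $h_I \to +\infty$ the inner arcs become nearly rectilinear away from the centre, while near the centre the Keplerian singularity acts as a strong scatterer: a small change in the impact parameter of a near-collision arc produces a controllably large change in the outgoing direction. I would capture this with a near-collision return map and show it is hyperbolic with expansion growing in $h_I$, which is what permits one to steer a returning arc into a prescribed neighbourhood of either $\ell_1$ or $\ell_2$. Non-degeneracy of the central configurations — non-degeneracy of the second differential of $|\cdot|_{\partial D}$ at $\bar P_j$ — ensures that the two homothetic orbits are hyperbolic for the boundary return map and persist under the small Keplerian deflection, so that they admit transverse heteroclinic and homoclinic connections among themselves.

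Finally I would assemble the symbolic dynamics by a broken-geodesic, minimal-action argument in the spirit of Bolotin: for each finite word in $\{1,2\}$ minimise the Jacobi length over the class of concatenated inner (and, in the refractive case, outer) arcs whose successive near-centre passages shadow the prescribed sequence of homothetic lines, then verify that the minimiser is an authentic regularised solution, neither tangent to $\partial D$ nor collapsing onto a spurious collision. A compactness and diagonal limiting argument upgrades finite words to bi-infinite sequences, yielding a compact invariant set on which the return map is semiconjugate to the Bernoulli shift on two symbols, hence of positive topological entropy. I expect the main obstacle to be precisely this near-collision step: controlling the scattering uniformly and ruling out degenerate minimisers and boundary tangencies (where \eqref{eq:snell} itself degenerates in the refractive case) is what should force the largeness assumption on $h_I$ together with the non-antipodality of the two central configurations.
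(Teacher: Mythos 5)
This theorem is not actually proved in the present paper: it is imported verbatim from \cite{IreneSusViNEW} (Theorem 1.4 there), and the paper only records the mechanism of that proof, namely a symbolic dynamics whose trajectories shadow concatenations of the homothetic motions associated to the non-antipodal central configurations, giving a conjugation with the Bernoulli shift. Your proposal --- Levi-Civita regularisation, the Jacobi--Maupertuis length, and a Bolotin-style minimal-action construction of orbits shadowing the two homothetic half-lines, assembled into a two-symbol shift (semi)conjugation with the largeness of $h_I$ controlling the near-collision steps --- is essentially that same strategy, so it matches the approach the paper attributes to \cite{IreneSusViNEW}.
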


The chaoticity in the previous theorem is intended as a conjugation with the Bernoulli shift map obtained by constructing a suitable symbolic dynamics (see \cite{Dev_book}), \textit{shadowing} concatenations of homothetic motions. \\
We observe that a centred ellipse (namely, with the centre at the origin) is an admissible domain since it has four central configurations corresponding to the intersection of the two axes with the boundary. Hence, any centred ellipse is chaotic in both types of Keplerian billiards, at least for large enough inner energies. \\
On the other hand, in (\cite{takeuchi2021conformal}) Takeuchi and Zhao show that a focused ellipse (i.e. an elliptic Kepler billiard with the mass at one of the two foci) is integrable in the reflective case. It is then clear that, keeping the mass at the origin and simply translating the ellipse, there is a transition from integrability to chaos and vice versa. This notable bifurcation phenomenon, obtained simply by a translation, motivates a further analysis on the actual holding of the admissibility condition in relation with the position of the Keplerian centre inside the billiard. {The great interest on elliptic domains in the classical setting encouraged us to look at this direction also in the non-trivial potential framework (see for instance \cite{fedorov2001ellipsoidal,pustovoitov2019topological,kobtsev2020elliptic,takeuchi2021conformal}).}
In the first analytical result (see Theorem \ref{thm:primo_teorema}) we characterise completely the admissibility of the elliptic case in terms of the position of the central mass. In particular, the ellipse is admissible for every position of the latter, except for a zero-measure set consisting of two or four, depending on the eccentricity, disjoint segments. 
Let us outline that admissibility is just a sufficient condition to have chaos and not a necessary one: in Section \ref{subsec:num_chaos} we propose some numerical investigations that show the presence of chaos also in the absence of admissibility, at least for high inner energies. We stress that such section is particularly interesting since it presents the numerical analysis of cases not covered by Theorem \ref{thm:main_lavoro}; in particular, it presents evidences of chaotic behaviour \emph{even in the focused elliptic case} for the refractive dynamics. 

The second analytical result we prove (Theorem \ref{thm:DeltaKep}) concerns the linear stability of homothetic trajectories in the reflective case (the analogous for the refractive billiard is stated in \cite{deblasiterraciniellissi}). In brief, Theorem \ref{thm:DeltaKep} claims the existence of numerical constants, depending on the physical parameters of the problem, whose sign determines the stability of the homothetics. For simplicity, we consider the central configurations of an ellipse with the mass on the horizontal axis, covering both the integrable and the non-admissible case, although the technique we used can be easily generalised to any domain.  In the case considered, the domain admits two horizontal homothetics:  one  is always a saddle, while the other becomes a second saddle for $h_I$ sufficiently large. It is well-known that the presence of multiple saddles and heteroclinic connections between them can be considered as chaos precursors: in this sense this second result is connected to the first one. On the other hand, also in the integrable case such result contains a novelty with respect of the current literature, giving an explicit and easy-to-verify condition for the stability of the equilibrium trajectories. 
Theorem \ref{thm:DeltaKep} highlights a fundamental difference between Kepler and classical Birkhoff billiards: \emph{while in the first case the stability of the homothetics trajectories can change with the physical parameters, this does not happen in the classical case, where such stability is the same for any energy level.} \\
In Section \ref{subsec:bif} we use the above results to test our numerical routines (produced with \emph{Mathematica}$\,^\copyright$), giving practical examples of bifurcation phenomena and obtaining a very good coherence of numerical and analytical expected results. The examples we propose are focused or close-to-focused ellipses and close-to-centred circles, being the focused ellipse and the centred circle two integrable cases: we highlight that centred circles are \emph{highly degenerate}, since both the reflective and refractive dynamics reduce in this case to a simple translation map (see \cite{IreneSusNew}). 

\section{Analytical results}
In this section, we will present two analytical results regarding elliptic domains: the first concerns a finer analysis of the condition of admissibility in Definition \ref{def:cc_intro} already introduced in \cite{IreneSusViNEW}, and holds for both the reflective and refractive case. The second one deals with the linear stability of homothetic trajectories in the reflective case: an analogous result for the refractive case is presented in \cite{deblasiterraciniellissi}. \\
Since the potentials involved in our models are radial, hence rotationally invariant, it is not restrictive to consider ellipses with axes in the $x$ and $y$ direction: then, given $(x_0,y_0) \in \mathbb{R}^2$, we will consider curves parameterised by 
\begin{equation}\label{eq:ellipse}
	\gamma(\xi)=(\cos\xi+x_0, b\sin\xi+y_0), 
	\quad \xi \in [0,2\pi],
\end{equation}
where, without loss of generalisation, the semi-major axis is fixed to 1 and $b\in[0,1)$ is the length of the semi-minor one; we stress that $b \neq 1$: the circular case will be considered at the end of next Section. \\
 The point $(x_0, y_0)\in(-1,1)\times(-b, b)$ is the centre of the ellipse and, for the origin to be inside the domain,  it must satisfy
\begin{equation}\label{eq:cond_centre}
	x_0^2 + \frac{y_0^2}{b^2} <1.
\end{equation}

\subsection{Admissibility of elliptic domains}\label{ssec:adm}
In the case of an elliptic domain, it is possible to find explicit conditions linking the position of the centre $(x_0, y_0)$ to the notion of admissibility, according to Definition \ref{def:cc_intro}. 
\begin{lemma}\label{lem:conto}
	An elliptic domain $D$ whose boundary is parameterised by \eqref{eq:ellipse} and \eqref{eq:cond_centre} is not admissible if and only if
	\begin{equation}\label{eq:condChaos}
		\begin{cases}
			x_0=0\\
			\displaystyle |y_0|\geq \frac{1-b^2}{b}
		\end{cases}
	\text{ or }
	\begin{cases}
		y_0=0\\
		|x_0|\geq 1-b^2
	\end{cases}
	\end{equation}
\end{lemma}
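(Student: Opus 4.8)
The plan is to reduce the two geometric requirements in Definition~\ref{def:cc_intro} to the zeros of a single trigonometric polynomial and then to split into the ``off-axis'' and ``on-axis'' positions of the origin; the very name of the lemma already suggests that the heart of the argument is an explicit computation. First I would set $f(\xi)=|\gamma(\xi)|^2$ and observe that the orthogonality condition is exactly $f'(\xi)=0$, while the second condition---that the half-line from the origin meets $\partial D$ only at $\gamma(\bar\xi)$---is automatic, since $D$ is convex and contains the origin in its interior, so every ray from the origin crosses $\partial D$ exactly once. Thus central configurations are precisely the critical points of $f$, and such a point is non-degenerate if and only if it is a simple zero of $f'$. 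Expanding \eqref{eq:ellipse} gives $f'=-2g$ with
\[
 g(\xi)=(1-b^2)\sin\xi\cos\xi+x_0\sin\xi-by_0\cos\xi,
\]
so the whole problem is encoded in the zero set of the degree-two trigonometric polynomial $g$.

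Next I would isolate two facts. \emph{Antipodality}: because the origin is interior and $D$ convex, any line through the origin meets $\partial D$ in exactly two points, and two central configurations are antipodal precisely when they lie on such a line, which is then normal to the ellipse at both of them. A one-line computation with the normal direction $(b\cos t,\sin t)$ of the centred ellipse shows that a chord normal at both endpoints must be one of the two axes (this is where $b\neq1$ enters). Hence antipodal pairs exist if and only if the origin lies on an axis, i.e. $x_0=0$ or $y_0=0$, and they are then the two vertices on that axis. \emph{Counting}: as a degree-two trigonometric polynomial $g$ has at most four zeros counted with multiplicity, and being proportional to the derivative of the non-constant periodic function $f$ it has at least two zeros of odd multiplicity. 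Finally, combining $g=0$ with $g''=0$ eliminates $x_0\sin\xi-by_0\cos\xi$ and collapses to $\sin2\xi=0$, after which $g=0$ forces $x_0=0$ or $y_0=0$; thus a zero of multiplicity $\geq 3$ can occur only when the origin is on an axis.

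With these facts the off-axis case is immediate. If $x_0\neq0$ and $y_0\neq0$, no zero of $g$ has multiplicity $\geq3$, so the (at least two) odd-multiplicity zeros are simple and yield at least two non-degenerate central configurations; by the antipodality fact none of them is antipodal, so the domain is admissible. This already shows that non-admissibility forces the origin onto an axis, which is one half of the equivalence.

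It then remains to analyse the on-axis positions, and by the rotational symmetry of the radial potential it suffices to treat $y_0=0$; the case $x_0=0$ is identical after exchanging the two semi-axes, which turns the threshold $1-b^2$ into $(1-b^2)/b$. For $y_0=0$ one factors $g(\xi)=\sin\xi\big[(1-b^2)\cos\xi+x_0\big]$: the factor $\sin\xi$ produces the two $x$-vertices, an antipodal pair, while the bracket contributes the extra solutions $\cos\xi=-x_0/(1-b^2)$ exactly when $|x_0|\leq 1-b^2$. For $|x_0|<1-b^2$ these are two simple, off-axis, hence non-antipodal central configurations, so the domain is admissible; for $|x_0|>1-b^2$ only the antipodal vertex pair remains; and for $|x_0|=1-b^2$ the bracket root merges with a vertex into a degenerate zero, leaving a single non-degenerate central configuration. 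In the last two cases there is no pair of non-degenerate, non-antipodal central configurations, so non-admissibility on the $x$-axis holds exactly when $|x_0|\geq 1-b^2$; the symmetric analysis on the $y$-axis gives $|y_0|\geq (1-b^2)/b$, and together with the off-axis case this is precisely \eqref{eq:condChaos}. The hard part will be the antipodality characterisation---that the only double normals of an ellipse are its axes---together with the careful multiplicity bookkeeping at the two threshold equalities, where a vertex central configuration degenerates and is exactly what converts the strict inequality into the non-strict one.
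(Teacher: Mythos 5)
Your proposal is correct and takes essentially the same route as the paper: both reduce central configurations to the zeros of the same trigonometric polynomial $(1-b^2)\sin\xi\cos\xi+x_0\sin\xi-by_0\cos\xi$, dispose of the off-axis case by showing antipodal pairs force the origin onto an axis (the paper simply invokes the ellipse's symmetry where you prove the double-normal fact, and appeals to Weierstrass where you do the multiplicity bookkeeping), and then factor explicitly on each axis to obtain the thresholds $1-b^2$ and $(1-b^2)/b$. One microscopic slip: in the sub-case $y_0=0$, $x_0=0$ the extra roots $\xi=\pm\pi/2$ are themselves an antipodal pair, so ``off-axis, hence non-antipodal'' fails there --- but admissibility still holds by pairing, e.g., the simple zeros $\xi=0$ and $\xi=\pi/2$, so the conclusion is unaffected.
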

\begin{figure}
	\centering
	\begin{overpic}[height=0.18\textheight]{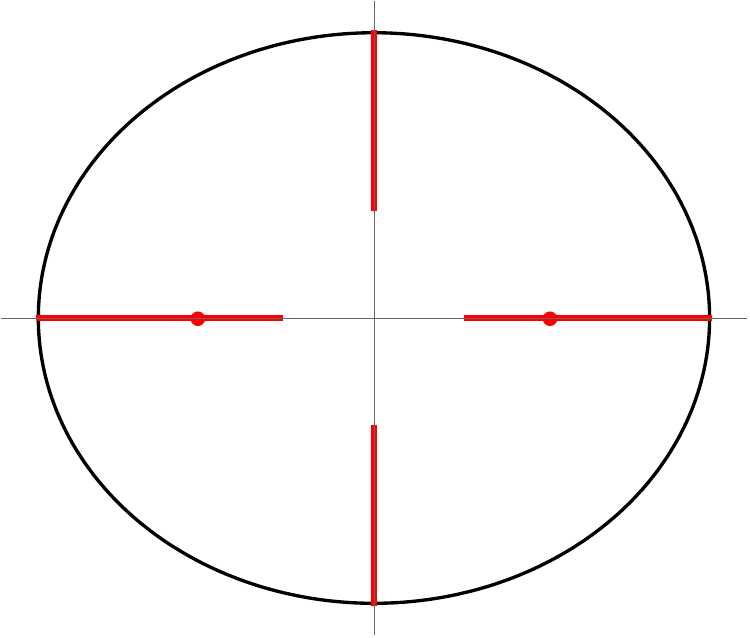}
		\put (50,45) {\rotatebox{0}{\tiny{$(x_0,y_0)$}}}
	\end{overpic}\qquad
	\begin{overpic}[height=0.18\textheight]{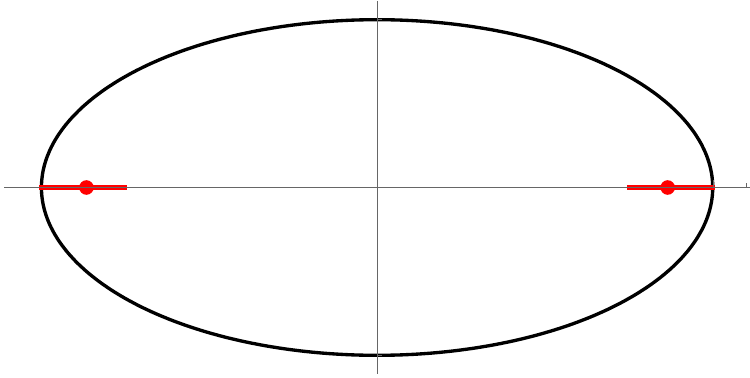}
		\put (51, 22) {\rotatebox{0}{\tiny{$(x_0,y_0)$}}}
	\end{overpic}
	\caption{Ellipses centred in a point $(x_0, y_0)$ satisfying condition \eqref{eq:condChaos}. Left: $b>1/\sqrt{2}$; right: $b<1/\sqrt{2}$. The red lines mark the positions of the origin for which the corresponding domain is not admissible.The red dots are in correspondence of the ellipses' foci. }
	\label{fig:segments}
\end{figure}

\begin{remark}
	Studying conditions \eqref{eq:condChaos} we deduce that $D$ is not admissible if and only if the centre of the ellipse belongs to at most two pairs of segments on the $x$ and $y$ axes, whose existence, location and length depends on $b$ (see Figure \ref{fig:segments}). The pair lying on the $x$ axis exists for any value of $b$; while the vertical one disappears whenever $b\sqrt{2}/2$. Indeed, in this case the threshold value $(1-b^2)/b$ is bigger than $b$.
	
	\noindent Moreover, we observe that the origin, and hence the Keplerian mass, is situated at one of the two foci of the ellipse if and only if $(x_0,y_0) = (\pm\sqrt{1-b^2}, 0)$. According to Lemma \ref{lem:conto}, in this case the domain is not admissible and then our result is consistent with \cite{takeuchi2021conformal} where this case is proved to be integrable for the reflective dynamics.
\end{remark}

\begin{proof}[Proof of Lemma \ref{lem:conto}]
	Our aim is to prove that {the function $\xi \to |\gamma(\xi)|$, $\xi \in [0,2\pi]$, admits at least two non degenerate critical points corresponding to non-antipodal direction if and only if condition \eqref{eq:condChaos} is not satisfied. Since $b\neq 1$ and  $|\gamma(\cdot)|$ is smooth, periodic and non-constant Weierstrass Theorem guarantees that there are at least two non degenerate central configurations.} 
	
	Let us start assuming that $x_0 y_0 \neq 0$: this means that the Keplerian centre, located at the origin, does not lie on any axis of the ellipse. In this case, by the symmetry properties of an ellipse, two central configurations cannot be aligned with the origin and the admissibility of $D$ follows.
%
	
	Let now consider the case $x_0y_0 =0$ Critical points of $|\gamma(\cdot)|$ are zeroes of the function
	\begin{equation*}
			f(\xi) \eqdef -\frac12 \frac{d}{d\xi}|\gamma(\xi)|^2 =  (1-b^2)\cos\xi\sin\xi+x_0\sin\xi-by_0\cos\xi. 
	\end{equation*}
When $y_0=0$, we have then
\begin{equation*}
	f(\xi)=\sin\xi \left[(1-b^2)\cos\xi+x_0\right],
\end{equation*}
which admits two antipodal zeroes in $\xi=0$, and  $\xi=\pi$. Moreover, whenever $|x_0| < 1-b^2$, it has two additional zeroes when $\xi=\pm \arccos\left(-x_0/(1-b^2)\right)$. In such case, the domain $D$ is admissible, while when $|x_0| \geq 1-b^2$ it is not. \\
Finally, when $x_0=0$, one has 
\begin{equation*}
	f(\xi) = \cos\xi\left[(1-b^2)\sin\xi-by_0\right], 
\end{equation*}
and then again there are two antipodal zeroes in $\pm\pi/2$ and two more in $\xi=\arcsin(by_0/(1-b^2))$ and $\xi=\pi-\arcsin(by_0/(1-b^2))$ only when $|y_0|<(1-b^2)/b$. 
\end{proof}

From Lemma \ref{lem:conto} and from the main result in \cite{IreneSusViNEW} immediately follows that \emph{both reflective and refractive Keplerian elliptic billiards are chaotic for large enough inner energies for almost every choice of the mass' position}. More precisely, we have

\begin{theorem}\label{thm:primo_teorema}
Let $D$ be an elliptic domain with different semi-axes and consider the dynamics of a reflective/refractive Keplerian billiard in $D$. Then there exists a subset $\widetilde D \subset D$ such that:
\begin{itemize}
	\item for any $P \in \widetilde D$, if the Keplerian mass is located  at $P$, then the dynamics is chaotic for large enough inner energies;
	\item $D \setminus \widetilde D$ has zero-measure and 
	the foci of $\partial D$ belong to $D \setminus \widetilde D$.
\end{itemize}
\end{theorem}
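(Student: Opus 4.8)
The plan is to obtain Theorem~\ref{thm:primo_teorema} as a direct consequence of the admissibility characterization in Lemma~\ref{lem:conto} combined with the chaos result of Theorem~\ref{thm:main_lavoro}. The only genuine work is to translate the statement, which fixes the ellipse $D$ and varies the position $P$ of the Keplerian mass, into the normalization \eqref{eq:ellipse}, which instead fixes the mass at the origin and varies the centre $(x_0,y_0)$.

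First I would reduce to the normalized setting. Since both potentials $V_I,V_E$ are radial, hence rotationally invariant, and since admissibility is a purely geometric condition on the critical points of $\xi\mapsto|\gamma(\xi)|$, I may rotate so that the axes of $D$ are aligned with the coordinate axes and rescale so that the semi-major axis equals $1$; the rescaling only modifies the energy threshold in Theorem~\ref{thm:main_lavoro} and leaves admissibility untouched. As already observed, it is equivalent to fix the domain and move the mass or to fix the mass at the origin and move the domain: placing the mass at $P$ inside the ellipse centred at $C$ corresponds, after the translation by $-P$, to placing the mass at the origin and centring the ellipse at $(x_0,y_0)=C-P$. Thus the admissibility of the configuration with mass at $P$ is governed by condition \eqref{eq:condChaos} read in these translated centre coordinates.

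Next I would define $\widetilde D$ to be the set of those $P\in D$ for which the resulting configuration is admissible, so that $D\setminus\widetilde D$ is the locus of non-admissible mass positions. By Lemma~\ref{lem:conto} this locus is the image, under the affine map $P\mapsto(x_0,y_0)$, of the set described by \eqref{eq:condChaos}; by the Remark following that lemma it is a union of at most two pairs of segments lying on the two axes of $D$. In particular $D\setminus\widetilde D$ is a finite union of segments and therefore has zero Lebesgue measure in $\R^2$. For every $P\in\widetilde D$ the domain is admissible, so Theorem~\ref{thm:main_lavoro} provides a topologically chaotic subsystem in both the reflective and the refractive case, provided $h_I$ is large enough; this yields the first bullet.

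Finally I would verify that the foci belong to the exceptional set. In the normalized coordinates the foci sit at $(\pm\sqrt{1-b^2},0)$, which satisfy $y_0=0$ and $|x_0|=\sqrt{1-b^2}$. Since $b\in(0,1)$ we have $1-b^2\in(0,1)$ and hence $\sqrt{1-b^2}>1-b^2$, so the non-admissibility condition $|x_0|\ge 1-b^2$ of \eqref{eq:condChaos} holds; therefore the foci lie in $D\setminus\widetilde D$. I do not expect a serious obstacle here, since the substance is imported from results established earlier: the single point requiring care is the bookkeeping of the change of viewpoint between moving the mass and moving the domain, so that \eqref{eq:condChaos} is applied to the correct translated centre coordinates.
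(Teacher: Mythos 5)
Your proposal is correct and takes essentially the same route as the paper: Theorem~\ref{thm:primo_teorema} is deduced there directly from Lemma~\ref{lem:conto} combined with Theorem~\ref{thm:main_lavoro}, the exceptional set being the (at most) two pairs of segments of \eqref{eq:condChaos}, and the foci case being settled exactly by the observation that $|x_0|=\sqrt{1-b^2}\ge 1-b^2$ for $b\in(0,1)$. Your explicit bookkeeping of the change of viewpoint between moving the mass and moving the centre only spells out what the paper (and its remark after Lemma~\ref{lem:conto}) leaves implicit.
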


To conclude, let us make two comments.
First of all, we observe that as far as the elliptic domain degenerates in a circle then the set of centre's positions for which the domain is not admissible suddenly becomes {\it the whole circle}; indeed, when the circle is centred at the Keplerian mass, there are infinitely many central configurations, but all of them are degenerate (in this case the billiard is integrable both in the reflective and refractive case, see \cite{IreneSusNew}). On the other hand, whenever the centre of the circle is displaced from the origin, there are exactly two central configurations which are non degenerate but antipodal, making the domain not admissible. 
Let us remark again that admissibility is just a sufficient condition to have chaos and not a necessary one: it is in principle possible to have chaos even in the absence of admissibility, and in Section \ref{subsec:num_chaos} we propose some numerical examples. 

\subsection{Linear stability of homothetic trajectories in the reflective case}

Let us consider an ellipse parameterised by \eqref{eq:ellipse} with $y_0 = 0$. In this case $\bar \xi =0$ and $\bar \xi =\pi$ are central configurations and hence homothetic equilibrium trajectories for the system, both in the reflective and in the refractive case. In this paragraph we analyse their linear stability in the reflective case (in the refractive one it has been already done in \cite{deblasiterraciniellissi}). Let us highlight that we focus on this special situation (centre of the ellipse on the horizontal axis) mainly for two reason: first, computations are much simpler that in the general case, although the general case can be treated by following the same scheme. On the other hand, it is particularly interesting from a dynamical point of view: indeed, moving the centre on the horizontal axis, we can cover both a chaotic and an integrable regime.
Moreover, this analysis allows to detect the presence of multiple saddles and the possible heteroclinic connections between them is a well known chaos indicator (see for instance \cite{koz1983}).\\

Before stating the main result of this section, we need to define the first return map associated to our dynamics: here, we will briefly recall the main definition, while the interested reader can find more details in \cite{IreneSusViNEW}.\\
Let us start with the reflective case, and consider initial conditions $(p_0,v_0)\in \partial D\times \R^2$ for an inner Keplerian arc at energy $h_I$, namely, such that $v_0$ points inside $D$ and $|v_0|^2/2+V_I(p_0)=0$ (see also Figure \ref{fig:primo_ritorno}, left). 
\begin{figure}[h!]
\begin{overpic}[height=0.18\textheight]{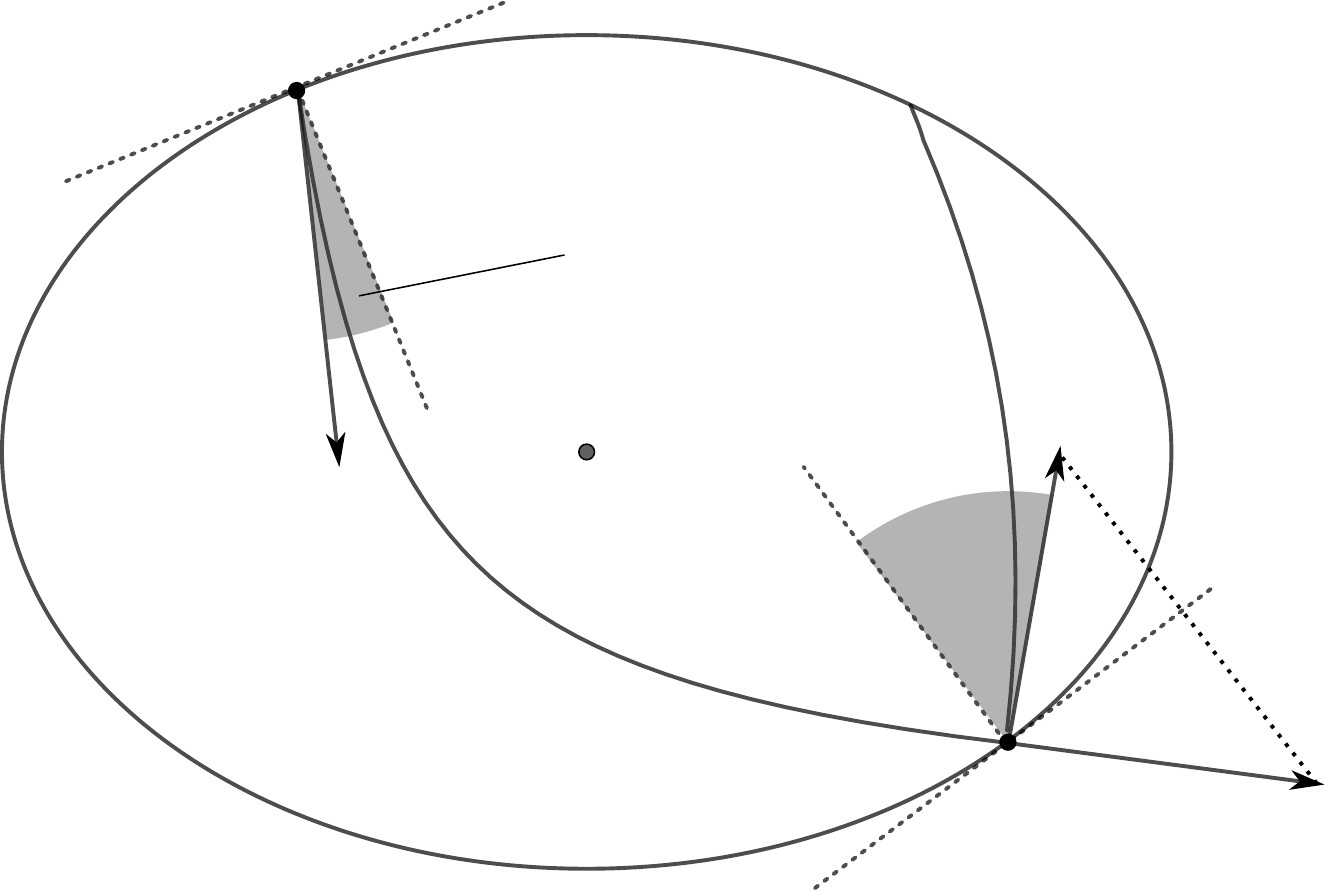}
	\put (10,60) {\rotatebox{20}{\small{$p_0=\gamma(\xi_0)$}}}
	\put (18,42) {\small{$v_0$}}
	\put (45,48) {\small{$\alpha_0$}}
	\put (73,5) {\rotatebox{-10}{\small{$p_1=\gamma(\xi_1)$}}}
	\put (68,25) {\rotatebox{20}{\small{$\alpha_1$}}}
	\put (102,7) {\rotatebox{0}{\small{$v'_1$}}}
	\put (79,35) {\rotatebox{0}{\small{$v_1$}}}
\end{overpic}\qquad\qquad
\begin{overpic}[height=0.22\textheight]{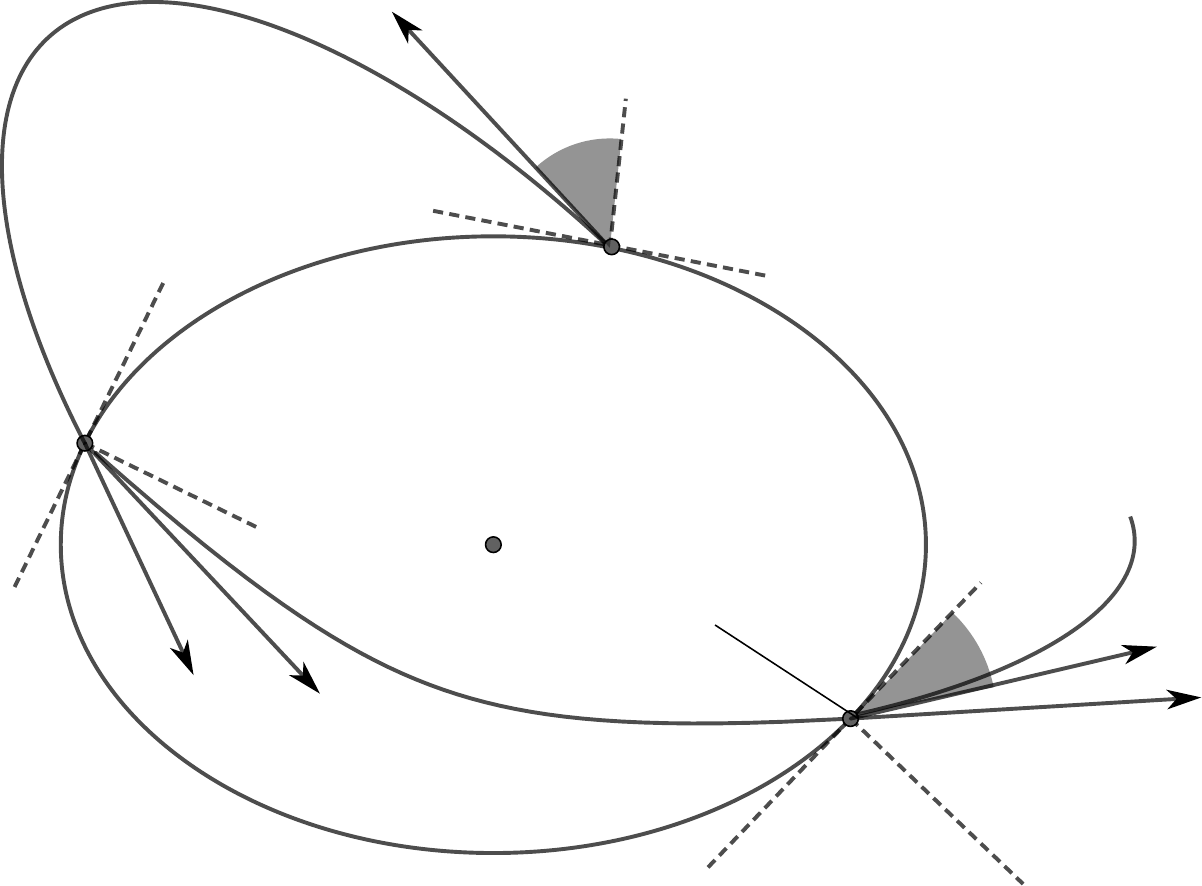}
	\put (46,58){\rotatebox{30}{\tiny{$\alpha_0$}}}
	\put (40,51){\rotatebox{-10}{\tiny{$p_0=\gamma(\xi_0)$}}}
	\put (35,71){\rotatebox{-35}{\tiny{$v_0$}}}
	\put (2,35){\rotatebox{30}{\tiny{$\tilde p$}}}
	\put (15,14){\rotatebox{0}{\tiny{$\tilde v'$}}}
	\put (25,12){\rotatebox{0}{\tiny{$\tilde v$}}}
	\put (50,23){\rotatebox{0}{\tiny{$p_1=\gamma(\xi_1)$}}}
	\put (75,16){\rotatebox{37}{\tiny{$\alpha_1$}}}
	\put (97,19){\rotatebox{0}{\tiny{$v'_1$}}}
	\put (101,15){\rotatebox{0}{\tiny{$v_1$}}}
\end{overpic}
	\caption{{Left: first return map for the reflective case. The initial conditions $(p_0,\alpha_0)$, parameterised by $(\xi_0,\alpha_0)$, are transformed into the conditions $(p_1,\alpha_1)$, corresponding to the pair $(\xi_1,\alpha_1)$. Right: first return map in the refractive case: from the initial conditions $(p_0,v_0)$, corresponding to $(\xi_0,\alpha_0)$, starts an elliptic outer arc which arrives on $\partial D$ in $\tilde p$ with velocity $\tilde v'$. Such velocity is refracted into $\tilde v$, and an inner Keplerian arcs starts from $(\tilde p,\tilde v)$. It intersects $\partial D$ again in $p_1$ with velocity $v_1'$, which is refracted into $v_1$. The final conditions $(p_1,v_1)$ correspond to the pair $(\xi_1,\alpha_1)$. }}
	\label{fig:primo_ritorno}
\end{figure}
The arc starting from $(p_0, v_0)$ will intersect again $\partial D$ in a point $p_1$ with velocity $v_1'$: if it is not tangent to the boundary, it will be reflected into $v_1$, by keeping the tangent component and inverting the normal one. At this point, $(p_1,v_1)$ can be considered as the initial conditions of a new inner Keplerian arc at the same energy, concatenated to the first one by reflection. \\
The map $(p_0,v_0)\mapsto (p_1,v_1)$ can be expressed as a two-dimensional discrete map by taking a suitable set of coordinates: recalling that $\partial D$ can be parameterised by a regular curve $\gamma:[0,2\pi)\to \R^2$, any initial condition $(p,v)$ can be univocally determined by the real numbers $(\xi,\alpha)\in [0,2\pi)\times (-\pi/2,\pi/2)$, such that $p=\gamma(\xi)$ and {$\alpha$ is the angle from the inward-pointing normal vector at $\gamma(\xi)$ to $v_0$.} We  then obtain the \textit\emph{first return map} for the reflective Kepler billiard
\begin{equation}\label{eq:intro_rit_rifl}
	F: [0,2\pi)\times \left(-\frac{\pi}{2}, \frac{\pi}{2}\right)\to [0,2\pi)\times \left(-\frac{\pi}{2}, \frac{\pi}{2}\right), \quad (\xi_0,\alpha_0)\mapsto(\xi_1,\alpha_1).  
\end{equation} 
The map $F$ is well defined whenever the arcs do not touch $\partial D$ tangentially; moreover, if $\bx$ is a central configuration according to Definition \ref{def:cc_intro}, then $(\bx,0)$ is a fixed point for $F$. Deriving analytically the linear stability of $(\bx,0)$ with respect to the map $F$ is completely equivalent to compute the linear stability of the corresponding equilibrium homothetic trajectory in the planar billiard dynamics. The map $F$ in the coordinates $(\xi,\alpha)$ will be used in Section \ref{sec:numerical} to describe the reflective dynamics for different domains' shapes and in the proof of Theorem \ref{thm:DeltaKep}. \\

Let us now pass to the refractive case: in principle, the method to construct the first return map is the same, although now we have to take into account an outer dynamics as well. In this case, we can start from $(p_0, v_0)\in \partial D\times \R^2$ initial conditions for an \emph{outer} arc, and follow the dynamics through a complete concatenation of the latter with the subsequent inner arc, as displayed in Figure 		\ref{fig:primo_ritorno}, right, by taking into account also the refractions. We obtain the pair $(p_1, v_1)$, from which one can start with a new concatenation of outer and inner arc. As in the reflective case, we can use coordinates $(\xi,	\alpha)$, where now $\alpha$ is the angle of the velocity with the outward-pointing normal vector to $\gamma$ in $\xi$, to obtain the first return map for the refractive case
\begin{equation}\label{eq:intro_rit_rifr}
		G: [0,2\pi)\times \left(-\frac{\pi}{2}, \frac{\pi}{2}\right)\to [0,2\pi)\times \left(-\frac{\pi}{2}, \frac{\pi}{2}\right), \quad (\xi_0,\alpha_0)\mapsto(\xi_1,\alpha_1).  
\end{equation}
The problem of the good definition of $G$ is slightly more complex than in the case of $F$, as it requires to take into account a particular property of Snell's law. Indeed,  whenever $h_I>h_E$ then  $V_E(p)<V_I(p)$ for every $p\in \partial D$. 
In terms of Snell's law, this translates into the fact that, while given any $\alpha_E$ it
 is always possible to find $\alpha_I$ such that Eq. \eqref{eq:snell} is verified, this is not true in the inverse case. Indeed, if we fix $\alpha_I$ the equation is solvable in the unknown $\alpha_E$ if and only if $\alpha_I$ is sufficiently small, and in particular if it is less than a \emph{critical angle}, depending on the transition point $p$: 
\begin{equation}\label{eq:crit_ang_intro}
	|\alpha_I|\leq \arcsin\left(\frac{V_E(p)}{V_I(p)}\right)=\alpha_{crit}. 
\end{equation} 
Geometrically, this means that, to be refracted outside, it is not enough that the inner arc are not tangent to the boundary: they need to be \emph{transversal enough} to it, giving a constraint to the initial conditions for which $G$ is well defined. We will see, in Section \ref{subsec:num_chaos}, that numerically such condition will define an invariant curve in the phase space $(\xi, \alpha)$, bounding the region where $G$ and its iterates are well defined. We recall that, exactly as in the reflective case, central configurations correspond to fixed points of $G$: the stability analysis of such points with respect to the domain's shape has been already carried on in \cite{deblasiterraciniellissi}. \\

{Let us now return to the reflective case, and, given $\bx$ central configuration, consider the fixed point $(\bx, 0)$. We will compute the linear stability of the latter by relying on the Implicit Function Theorem. To do that, we need to recall a special property of the solutions of the inner differential problem (see \cite[Appendix A]{IreneSusViNEW} for a more detailed explanation), seen as extremals of a suitable length functional.\\
To be more precise, let us consider two points on the boundary, parameterised by $\gamma(\xi_0), \gamma(\xi_1)$, and suppose that there exists a unique solution $z(\cdot)$ of the fixed ends problem\footnote{Although in principle the uniqueness is not guaranteed, it is possible to impose suitable topological constraints to obtain it, see \cite{IreneSusViNEW}.}
\begin{equation}
\begin{cases}
	z''(t)=\nabla V_I(z(t)) \quad & t\in [0,T]\\
	\dfrac{|z'(t)|^2}{2}+V_I(z(t))=0& t\in [0,T]\\
	z(t)\in D& t\in [0,T]\\
	z(0)=\gamma(\xi_0),\ z(T)=\gamma(\xi_1)
\end{cases}
\end{equation}
for some $T>0$. Let us define the \emph{Jacobi distance} between $\gamma(\xi_0)$ and $\gamma(\xi_1)$ as
\begin{equation}
	S(\xi_0,\xi_1)=\int_0^T|z'(t)|\sqrt{V_I(z(t))}dt.
\end{equation} 
Under suitable conditions,  the function $S$ is differentiable (at least in a neighbourhood of the point $(\bx,\bx)$), and 
\begin{equation}\label{eq:derS}
	\partial_{\xi_0}S(\xi_0,\xi_1) = -\sqrt{V_I(\gamma(\xi_0))}\frac{z'(0)}{|z'(0)|}\cdot\dot\gamma(\xi_0)
	\text{ and }
	\partial_{\xi_1}S(\xi_0,\xi_1) = \sqrt{V_I(\gamma(\xi_1))}\frac{z'(T)}{|z'(T)|}\cdot\dot\gamma(\xi_1).
\end{equation}
These relations represent the building blocks of the implicit function argument. 
}

Let us recall the first return map associated to our billiard, as introduced in \eqref{eq:intro_rit_rifl},
\[
F(\xi_0,\alpha_0) = (\xi_1,\alpha_1)
\]
defined on $[0,2\pi] \times \left(-\frac\pi2,\frac\pi2\right)$ into itself. Our aim now is to compute the Jacobian of $F$ in the fixed points $(0,0)$ and $(\pi,0)$ by means of Eqs. \eqref{eq:derS}.  Taking into account the definition of the angle $\alpha$ in the reflective case (i.e. the angle from the inner normal to the considered velocity) and the fact that the reflection law does not change the tangent component of the velocity, the above relations can be written as
\begin{equation}\label{eq:Phi}
	\begin{cases}
		\partial_{\xi_0}S(\xi_0,\xi_1)  -\sqrt{V_I(\gamma(\xi_0))}|\dot\gamma(\xi_0)| \sin(\alpha_0) = 0
		\\
		\partial_{\xi_1}S(\xi_0,\xi_1) + \sqrt{V_I(\gamma(\xi_1))}|\dot\gamma(\xi_1)| \sin(\alpha_1) = 0.
	\end{cases}
\end{equation}
We can hence say that $F(\xi_0,\alpha_0)=(\xi_1,\alpha_1)$ if and  only if the point $(\xi_0,\alpha_0,\xi_1,\alpha_1)$ is a zero of the vector function $\Phi =(\Phi_1,\Phi_2)$, whose components are the l.h.s. of two lines of \eqref{eq:Phi} .\\
Let us now consider a central configuration $\bar{\xi}$, corresponding to a fixed point $(\bar \xi, 0)$ for the map $F$, and the Jacobians of the map $\Phi$ at $(\bar \xi, 0,\bar \xi, 0)$ with respect to the variables $(\xi_0,\alpha_0)$, $(\xi_1, \alpha_1)$, which are given by
\[
D_{(\xi_0,\alpha_0)}\Phi(\bar \xi, 0,\bar \xi, 0) = 
\begin{pmatrix}
	\partial^2_{\xi_0\xi_0} S & -\sqrt{V_I}|\dot \gamma| \\
	\partial^2_{\xi_0\xi_1} S & 0
\end{pmatrix}
\]
and  
\[
D_{(\xi_1,\alpha_1)}\Phi(\bar \xi, 0,\bar \xi, 0) = 
\begin{pmatrix}
	\partial^2_{\xi_0\xi_1} S & 0 \\
	\partial^2_{\xi_1\xi_1} S & \sqrt{V_I}|\dot \gamma|
\end{pmatrix}
\]
where the second partial derivatives of $S$ are computed in $(\bar \xi,\bar\xi)$, $\sqrt{V_I} = \sqrt{V_I(\gamma(\bar \xi))}$ and $\dot \gamma = \dot \gamma (\bar \xi)$. \\
The computation of the second derivatives of $S$ is here omitted, but derives straightforwardly from Eq. (5.19) in \cite{deblasiterraciniellissi} and actually it turns out that
\begin{equation}\label{eq:der2S}
\begin{aligned}
	\partial^2_{\xi_0\xi_1} S = \frac{|\dot\gamma|^2}{4|\gamma|^2} \frac{\mu}{\sqrt{V_I}},
	\quad 
	\partial^2_{\xi_0\xi_0} S = \partial^2_{\xi_1\xi_1} S = -\partial^2_{\xi_0\xi_1} S +\varepsilon,
\end{aligned}
\end{equation}
where
\[
\varepsilon = \frac{|\dot\gamma|^2}{2|\gamma|^2} \frac{\mu+h_I|\gamma|}{\sqrt{V_I}} -2 \sqrt{|\gamma|} \sqrt{V_I} (1,0) \cdot \ddot \phi_-(\bar \xi).
\]
The formal definition of $\phi_-$ (which is a suitable transformation of the boundary needed to apply regularisation arguments) can be found in \cite[Eq. (5.17)]{deblasiterraciniellissi}. \\
It is then easy to show that $D_{(\xi_1,\alpha_1)}\Phi(\bar \xi, 0,\bar \xi, 0)$ is invertible and, by the implicit function theorem we can compute 
\[
DF(\bar \xi, 0) = -[D_{(\xi_1,\alpha_1)}\Phi(\bar \xi, 0,\bar \xi, 0)]^{-1}\, D_{(\xi_0,\alpha_0)}\Phi(\bar \xi, 0,\bar \xi, 0)
\]
that is
\[
DF(\bar \xi, 0) = 
\begin{pmatrix}
	\displaystyle
-\frac{\partial^2_{\xi_0\xi_0} S }{\partial^2_{\xi_0\xi_1} S } & \displaystyle \frac{\sqrt{V_I}|\dot \gamma|}{\partial^2_{\xi_0\xi_1} S } \\[5mm]
\displaystyle
\frac{\partial^2_{\xi_0\xi_0}S\  \partial^2_{\xi_1\xi_1} S - (\partial^2_{\xi_0\xi_1} S)^2}{\partial^2_{\xi_0\xi_1} S \sqrt{V_I}|\dot \gamma|} & \displaystyle - \frac{\partial^2_{\xi_1\xi_1} S }{\partial^2_{\xi_0\xi_1} S }
\end{pmatrix}
\]
The linear stability of $(\bx, 0)$ as fixed point of $F$ depends on the eigenvalues of  $DF(\bx,0)$, that is, on the sign of the discriminant $\Delta$ of the characteristic polynomial associated to the matrix. In particular, since the determinant of the matrix is $1$, whenever $\Delta$ is positive, $DF(\bx,0)$ has two eigenvalues $\lambda_1,\lambda_2=\lambda_1^{-1}\in \R$, and then $(\bx, 0)$ is a saddle point. If, on the contrary, $\Delta<0$, then the eigenvalues are complex, conjugated and unitary, and hence $(\bx,0)$ is a centre. From direct computations and using \eqref{eq:der2S}, one has 
\begin{equation}\label{eq:Delta}
	\Delta=\left(tr\left(DF(\bx,0)\right)^2-4\right)=\frac{4}{(\partial^2_{\xi_0\xi_1} S)^2}\varepsilon\left(\varepsilon-2\partial^2_{\xi_0\xi_1} S \right); 
\end{equation}
hence the sign of $\Delta$ depends on the quantity $\varepsilon(\varepsilon-\partial^2_{\xi_0\xi_1} S )$, which need to be computed separately in the case $\bx=0$ and $\bx=\pi$.  \\
Let us start by the case $\bx=0$: with the same techniques used in \cite{deblasiterraciniellissi} and taking into account the explicit expression of $\gamma$, one has 
\begin{equation}
	|\gamma|=1+x_0, \quad |\dot\gamma|=b, \quad \ddot\phi_-(0)=\frac{1}{2\sqrt{1+x_0}}\left(1-\frac{b^2}{2(1+x_0)}\right)(1,0), 
\end{equation}
from which one has 
\begin{equation}
	\begin{aligned}
	&\varepsilon_{|_{\bx=0}}=\sqrt{V_I(\gamma(0))}\left(\frac{b^2}{1+x_0}-1\right)\\
	&\varepsilon_{|_{\bx=0}}-2\partial^2_{\xi_0\xi_1} S(0,0)=\sqrt{V_I(\gamma(0))}\left(\frac{b^2}{1+x_0}-1-\frac{b^2\mu}{2(1+x_0)^2}\frac{1}{V_I(\gamma(0))}\right). 
	\end{aligned}
\end{equation}
From direct computations, it is possible to prove that, for every $0<b<1, h_I>0, \mu>0$ and $x_0>0$, in the case $\bx=0$ the quantity $\varepsilon(\varepsilon-\partial^2_{\xi_0\xi_1} S )$ is always positive: this means that the homothetic trajectory in the direction of $\gamma(0)=(1,0)$ is always unstable when the ellipse's centre is on the $x$-axis.\\
As for the case $\bx=\pi$, one has 
\begin{equation}
	\varepsilon_{|_{\bx=\pi}}(\varepsilon_{|_{\bx=\pi}}-\partial^2_{\xi_0\xi_1} S(	\pi,	\pi) )=V_I(\gamma(\pi))\left(\frac{b^2}{1-x_0}-1\right)\left(\frac{b^2}{1-x_0}-1-\frac{b^2\mu}{2(1-x_0)^2}\frac{1}{V_I(\gamma(\pi))}\right).  
\end{equation} 
Unlike the case $\bar\xi=0$, the above quantity could change sign depending on $x_0, h_I, b$ and $\mu$, leading to bifurcation phenomena. We will provide numerical evidences of this, characterizing such bifurcation as a pitchfork. 
\begin{theorem}\label{thm:DeltaKep}
	Let $\gamma(\xi)=(\cos\xi+x_0, b \sin\xi)$, $x_0\in[0,1)$, $b\in(0,1)$ the parameterisation of an ellipse with eccentricity $e=\sqrt{1-b^2}$, semimajor axis equal to $1$ and the centre lying on the positive $x-axis$. Then the central configuration $\bx=0$ is always unstable, while the stability of $\bx=\pi$ depends on the sign of the quantity
	\begin{equation}\label{eq:Cpi}
		C_{\pi}=\left(\frac{b^2}{1-x_0}-1\right)\left(\frac{b^2}{1-x_0}-1-\frac{b^2\mu}{2(1-x_0)^2}\frac{1}{V_I(\gamma(\pi))}\right), 
	\end{equation}
in the sense that
\begin{itemize}
	\item if $C_\pi>0$, then $\bx=\pi$ is unstable; 
	\item if $C_\pi<0$, then $\bx=\pi$ is stable. 
\end{itemize}
The stability of $0$ and $\pi$ swaps when $x_0\in(-1,0]$. 
\end{theorem}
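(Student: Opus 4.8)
The plan is to read off the linear stability of each homothetic orbit from the single scalar $\Delta=\bigl(\mathrm{tr}\,DF(\bx,0)\bigr)^2-4$, exploiting that the reflective return map is area preserving. First I would record that the matrix $DF(\bx,0)$ assembled above has determinant $1$: this is immediate from its four entries, since the off-diagonal product cancels the cross term of the $2\times 2$ determinant, and it reflects the exact-symplectic nature of the Jacobi-length variational formulation behind \eqref{eq:derS}. Consequently the eigenvalues are $\la_1,\la_1^{-1}$, so $(\bx,0)$ is a hyperbolic saddle (linearly unstable) when $\Delta>0$ and an elliptic centre (linearly stable) when $\Delta<0$. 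Inserting \eqref{eq:der2S} into $\mathrm{tr}\,DF(\bx,0)=2-2\varepsilon/\partial^2_{\xi_0\xi_1}S$ yields \eqref{eq:Delta}, so that $\mathrm{sgn}\,\Delta=\mathrm{sgn}\bigl(\varepsilon(\varepsilon-2\partial^2_{\xi_0\xi_1}S)\bigr)$; the whole statement then reduces to fixing this sign at $\bx=0$ and at $\bx=\pi$.

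For $\bx=0$ I would substitute the geometric data $|\gamma|=1+x_0$, $|\dot\gamma|=b$ and $\ddot\phi_-(0)$ to recover the two displayed factors. The key elementary remark is that, up to the common positive factor $\sqrt{V_I(\gamma(0))}$, both entries of the product are strictly negative: since $0<b<1$ and $x_0\ge 0$ one has $b^2<1\le 1+x_0$, so $b^2/(1+x_0)-1<0$, and the second entry is obtained from the first by subtracting the positive quantity $b^2\mu/\bigl(2(1+x_0)^2 V_I(\gamma(0))\bigr)$. A product of two negatives is positive, so $\Delta>0$ for \emph{every} admissible $b,h_I,\mu$ and every $x_0\in[0,1)$, and $\bx=0$ is always a saddle.

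For $\bx=\pi$ the same substitution with $|\gamma|=1-x_0$ gives $\varepsilon(\varepsilon-2\partial^2_{\xi_0\xi_1}S)=V_I(\gamma(\pi))\,C_\pi$, with $C_\pi$ as in \eqref{eq:Cpi}. Since $V_I(\gamma(\pi))=h_I+\mu/(1-x_0)>0$, the sign of $\Delta$ coincides with that of $C_\pi$, which gives the stated dichotomy. What distinguishes this configuration from $\bx=0$ is that here $1-x_0$ can be smaller than $b^2$, precisely when $x_0>1-b^2=e^2$, so the first factor $b^2/(1-x_0)-1$ may turn positive and the product $C_\pi$ may change sign; this is exactly the threshold I would later resolve numerically as a pitchfork bifurcation.

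Finally, for $x_0\in(-1,0]$ I would deduce the swap from the reflection $R(x,y)=(-x,y)$, which leaves the radial potential $V_I$ invariant and carries the ellipse centred at $(x_0,0)$ onto the one centred at $(-x_0,0)$ via $\gamma(\xi)\mapsto\tilde\gamma(\pi-\xi)$; this conjugates the two reflective return maps and sends the fixed point $\bx=0$ of the $x_0$-ellipse to the fixed point $\bx=\pi$ of the $(-x_0)$-ellipse, preserving the eigenvalues of the linearisation. Equivalently, the stability quantities are exchanged under $x_0\mapsto -x_0$ together with $0\leftrightarrow\pi$, so the two central configurations interchange their roles on the negative semi-axis. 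I expect the only genuinely delicate ingredient to be the second-order data behind \eqref{eq:der2S} — the Hessian of the Jacobi distance $S$ and the explicit $\ddot\phi_-$ of the regularising boundary transformation, which rest on the variational analysis of \cite{deblasiterraciniellissi}; once these are granted the remainder is a short positivity argument, a sign identification, and the symmetry observation, with no further serious obstacle.
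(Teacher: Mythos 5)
Your proposal is correct and follows essentially the same route as the paper: linearisation of the return map via the implicit function theorem applied to the Jacobi distance $S$, the identity $\det DF(\bx,0)=1$ together with the discriminant formula \eqref{eq:Delta}, and the sign evaluation of $\varepsilon\left(\varepsilon-2\partial^2_{\xi_0\xi_1}S\right)$ at $\bx=0$ and $\bx=\pi$ using \eqref{eq:der2S}. Your additions are welcome but minor: you make explicit the two-negative-factors positivity argument at $\bx=0$ and justify the stability swap for $x_0\in(-1,0]$ via the reflection symmetry conjugating the two return maps, both points the paper leaves as brief unproved remarks.
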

\begin{remark}
	Up to Eq.\eqref{eq:Delta}, the method described is completely general, and can be considered valid to compute the linear stability of any central configuration in a general domain. The computation of the explicit expression of the partial second derivatives of $S$ depends instead on $\gamma$, as well as on $\bx$. 
\end{remark}

\begin{remark}\label{rem:prop_cerchio}
	Theorem \ref{thm:DeltaKep} trivially extends to the circular non centred case $b=1$, $x_0 \neq 0$, which guarantees the existence of two antipodal non-degenerate central configurations.
\end{remark}

\section{Numerical simulations}\label{sec:numerical}

In this section we pass from a purely analytical analysis of our model to a numerical investigation. We will provide several examples of phase portraits both in the reflective and refractive case taking into account different domains and values of the physical parameters. The Poincar\'e sections are obtained by mean of $Mathematica^\copyright$\, software. We highlight that the aim of this analysis is dual: on one hand, a comparison with the expected theoretical results from Theorem \ref{thm:DeltaKep} and its analogous in \cite{deblasiterraciniellissi} will corroborate the validity of our routines. Secondly, it allows the exploration of cases still not covered by the analytical results of Theorems \ref{thm:main_lavoro} and \ref{thm:primo_teorema}.

\subsection{Evidences of bifurcations} 
\label{subsec:bif} Here, we propose some simulations that validate our numerical routines and display the presence of bifurcation phenomena both in the reflective and the refractive case. 

The first example we propose is a focused elliptic reflective Keplerian billiard:  this system is integrable and admits two homothetic equilibria in $(0,0)$ and $(\pi,0)$, for any value of the physical parameters. Figure \ref{fig:rifl_integ} shows the phase portrait for two different energy regimes (our choice is to fix the ellipse and $\mu$ and change $h_I$, although of course one can chose to fix the energy and move another quantity); on the left, one can check that the quantity $C_\pi$ in Eq. \eqref{eq:Cpi} is negative, and in fact we observe that $(\pi,0)$ is a centre. Conversely, on the right,  $C_\pi>0$ and $(\pi,0)$ is a saddle. 

\begin{figure}[h!]
	\includegraphics[width=0.4\textwidth]{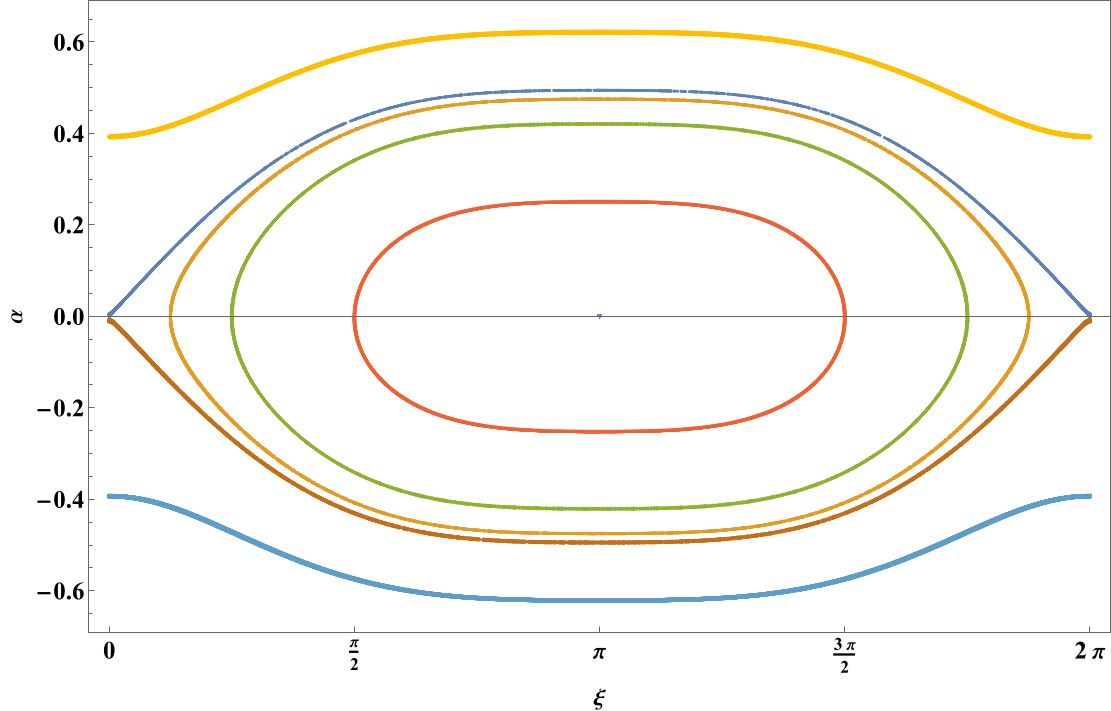} \qquad 
	\includegraphics[width=0.4\textwidth]{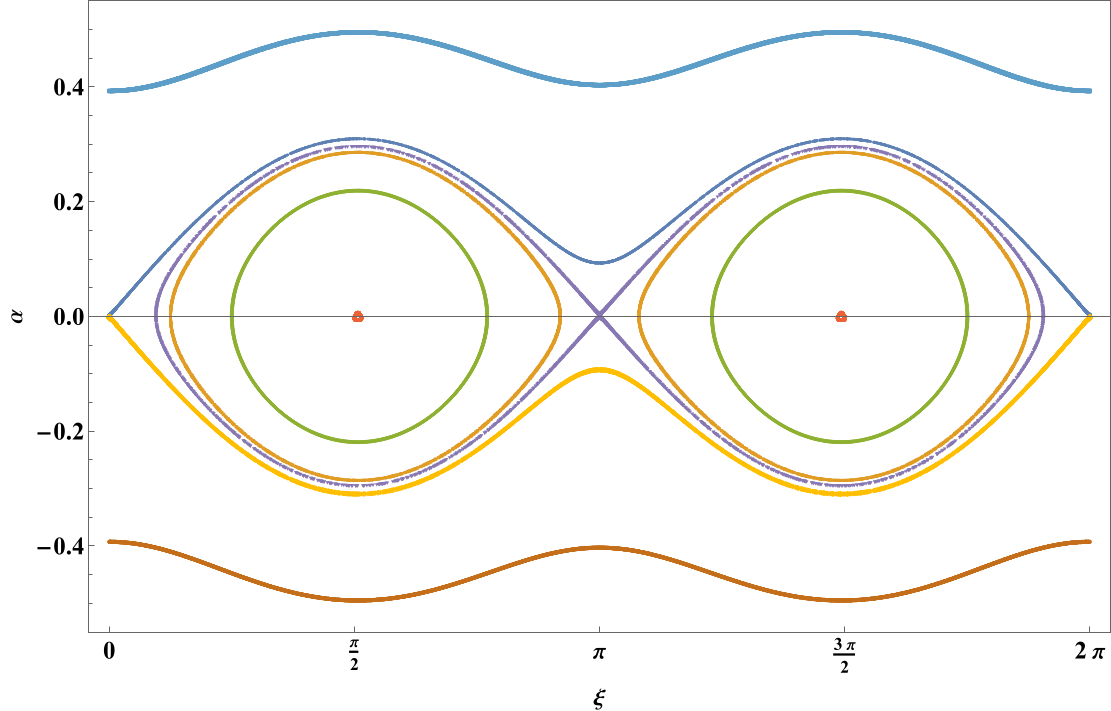}
	\caption{Phase portrait of an elliptic Keplerian reflective billiard with the mass at a focus (integrable case). The physical parameters are: $x_0=e=0.3$, $\mu=2$, $h_I=3$ (left) and $h_I=150$ (right).}
	\label{fig:rifl_integ}
\end{figure}

\noindent Let us observe that, as far as the mass parameter, the centre and the eccentricity of the ellipse are fixed, the quantity $C_\pi$ just depends on $h_I$ and it is strictly increasing. Furthermore, there exists a transition value, $\tilde h$, such that when $h_I < \tilde h$, then $(\pi,0)$ is a centre, while when $h_I > \tilde h$, then $(\pi,0)$ is a saddle (see Figure \ref{fig:transizione_rifl}). 
\begin{figure}[h!]
	\includegraphics[width=0.4\textwidth]{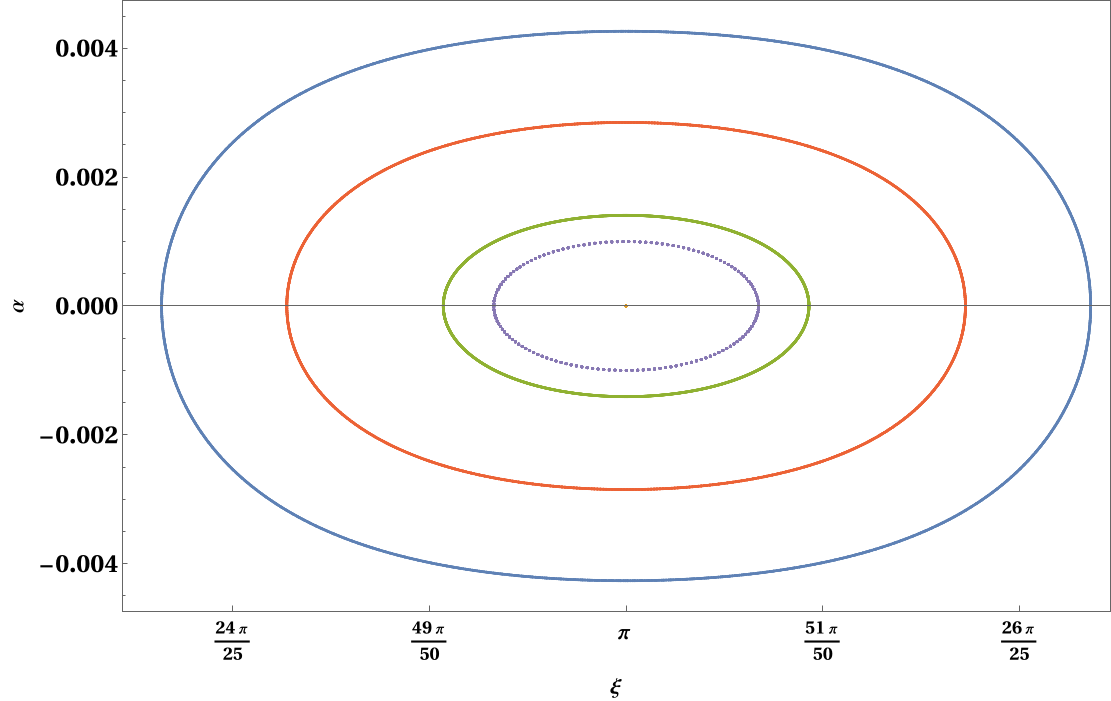} \qquad 
	\includegraphics[width=0.4\textwidth]{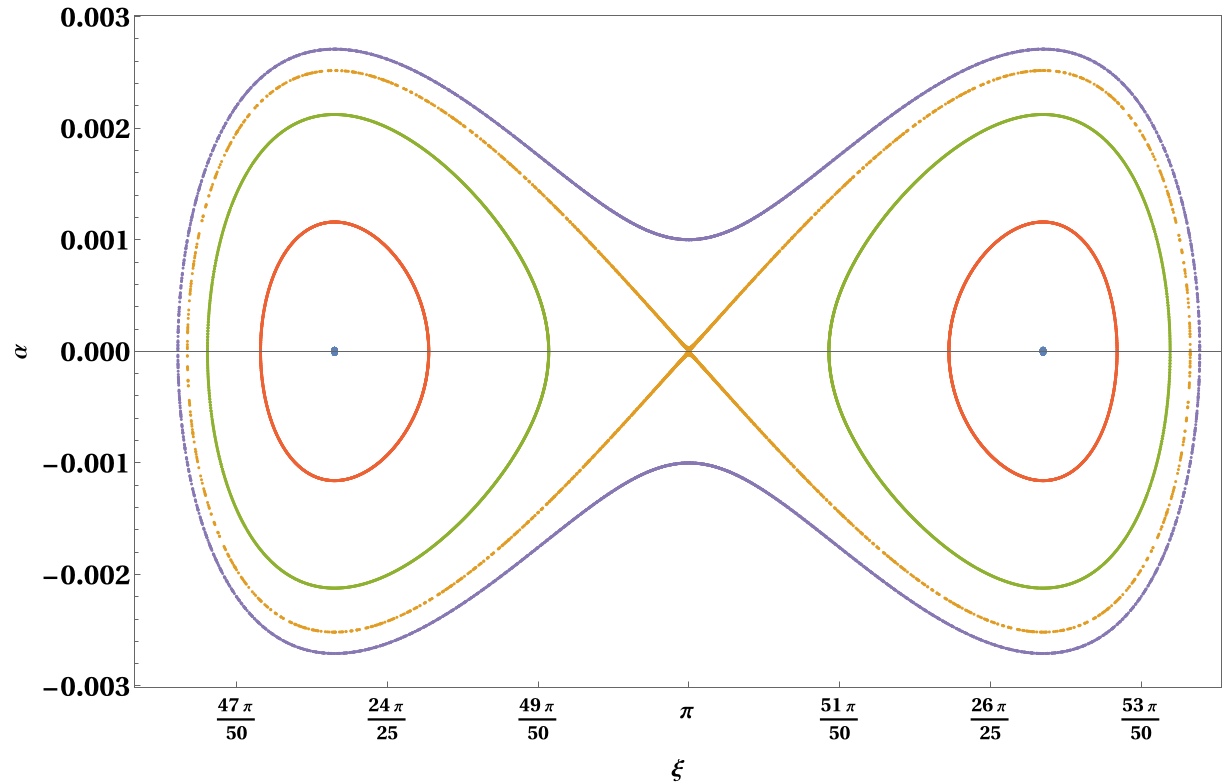}\\
	\includegraphics[width=0.4\textwidth]{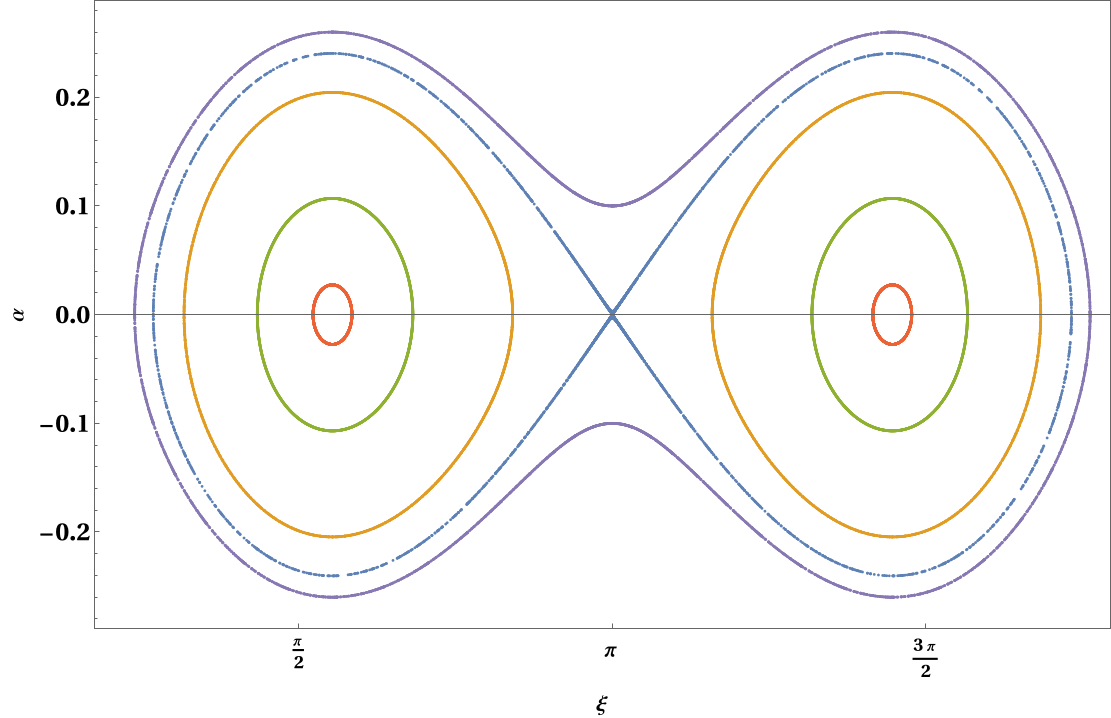}\qquad
	\includegraphics[width=0.4\textwidth]{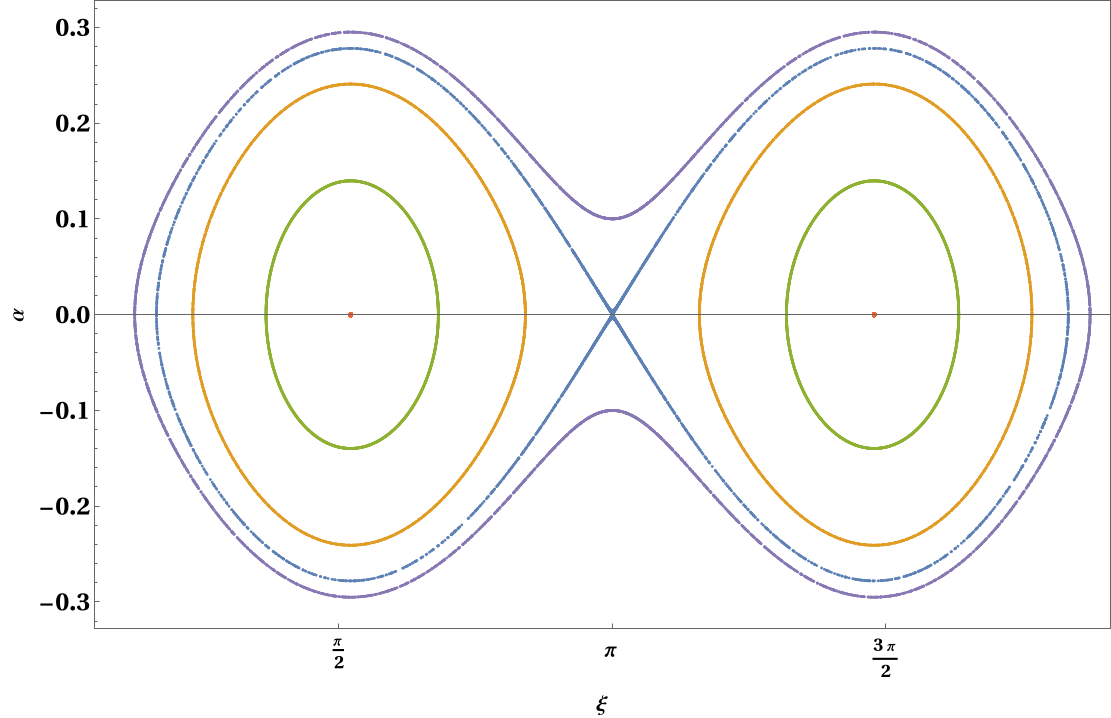}
	\caption{Phase portrait in a neighbourhood of $(\pi,0)$ for the elliptic Keplerian reflective billiard with physical parameters: $x_0=e=0.3$, $\mu=2$. With this choice, the bifurcation value is $\tilde h \approx 3.333$: in the first line we catch this transition and the inner energy is $h_I=3.3$ (left) and $h_I=3.7$ (right). We can observe a pitchfork bifurcation: the centre splits into  a saddle and a pair of 2-periodic points. In the second line we chose $h_I=20$ (left) and $h_I=50$ (right): here we can observe that the distance between the two periodic points increases, as they tend to converge to $(\pi/2,0)$ and $(3\pi/2, 0)$. We propose Table \ref{tab:brake} for a more detailed description of this behaviour.}
	\label{fig:transizione_rifl}
\end{figure}

\noindent Figures \ref{fig:rifl_integ} and \ref{fig:transizione_rifl} also show that a pitchfork bifurcation occurs when $h_I$ crosses the value $\tilde h$: the centre at $(\pi,0)$ becomes a saddle and a pair of two-periodic points arises in its neighbourhood. Such two-periodic points still belong to the horizontal axis, $\alpha=0$: let us call them $(\xi_{brake},0)$ and, by symmetry, $(2\pi-\xi_{brake},0)$. Translating them to the two-dimensional dynamics, it turns out that they correspond to a brake trajectory (see Figure \ref{fig:rifl_brake}), which bounces into itself at every intersection with the ellipses.
\begin{figure}[h!]
	\includegraphics[width=0.25\textwidth]{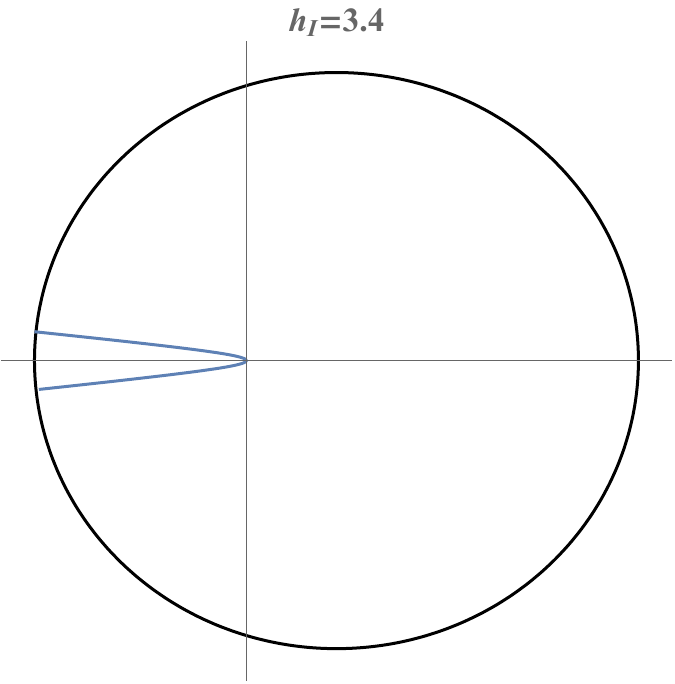} \qquad 
	\includegraphics[width=0.25\textwidth]{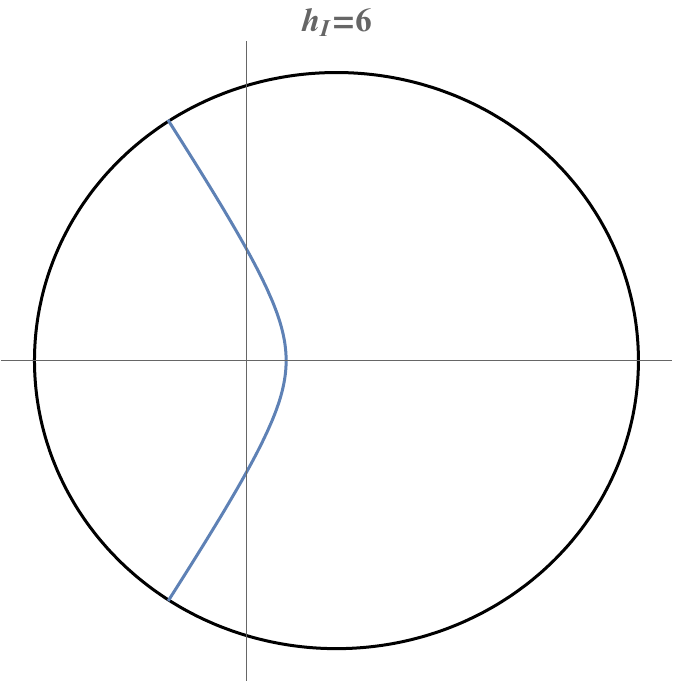}\qquad
	\includegraphics[width=0.25\textwidth]{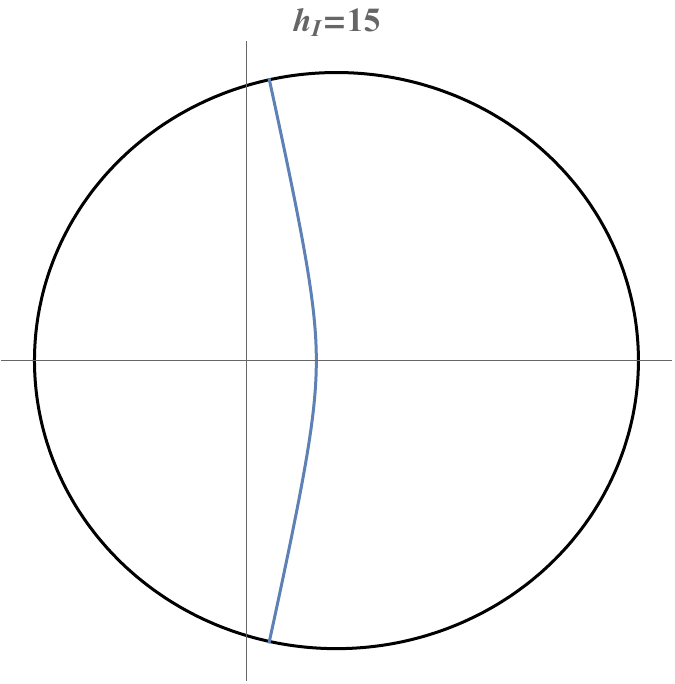}
	\caption{Brake orbits corresponding to the two-periodic points rising from the pitchfork bifurcation of $(\pi,0)$. When $h_I$ increases, the orbit becomes more and more straight and tends to the vertical two-periodic trajectory of a classical Birkhoff billiard.}
	\label{fig:rifl_brake}
\end{figure}
Table \ref{tab:brake} shows the parameter $\xi_{brake}$  starting from values of $h_I$ very close to $\tilde h$, and their distance from $\pi/2$. It is evident how, for increasing values of $h_I$, such point tends to get closer and closer to $\pi/2$; a specular behaviour can be showed for the convergence of the right brake point to $(3\pi/2,0)$. {We highlight that, from a rigorous analytical point of view, the analysis of such brake trajectories can be carried on by means of the \textit{free fall method}, already used in a similar case in \cite{deblasiterraciniellissi} for the refractive case}. 

\begin{table}[h!]
	\begin{tabular}{|c|c|c||c|c|c|}
		\hline
		\rule[-1ex]{0pt}{2.5ex} $h_I$ & $\xi_{brake}$ & $|\xi_{brake}-\pi/2|$ & $h_I$ & $\xi_{brake}$ & $|\xi_{brake}-\pi/2|$ \\	
		\hline\hline
		\rule[-1ex]{0pt}{2.5ex} 3.35 & 3.0418 & 1.471 & 30 & 1.6821 & 0.1113\\
		\hline
		\rule[-1ex]{0pt}{2.5ex} 3.5 & 2.8318 & 1.2610 & 50 & 1.6375 & 0.0667 \\
		\hline
		\rule[-1ex]{0pt}{2.5ex} 4 & 2.5559 & 0.9851 &70 & 1.6184 & 0.0476 \\
		\hline
		\rule[-1ex]{0pt}{2.5ex} 6 & 2.1598 & 0.5890 & 100 & 1.6041 & 0.0333 \\
		\hline
		\rule[-1ex]{0pt}{2.5ex} 10 & 1.9106 & 0.3398 & 150 & 1.5930 &0.0222  \\
		\hline
		\rule[-1ex]{0pt}{2.5ex} 15 & 1.7949 & 0.2241 & 250 & 1.5841 & 0.0222 \\
		\hline
		\rule[-1ex]{0pt}{2.5ex} 20 & 1.7382 & 0.1674 & 500 & 1.5775 & 0.0067 \\
		\hline
	\end{tabular}
	\caption{Parameter $\xi_{brake}$ of the left 2-periodic point of a brake orbit for the integrable elliptic reflective case when $(0,\pi)$ is a saddle, with physical values $x_0=e=0$, $\mu=2$, and for different values of the energy. It is evident how $\xi_{brake} \to \pi/2$ as the energy increases. A geometric representation of the behaviour of the corresponding brake trajectories in some cases is depicted in Figure \ref{fig:rifl_brake}. }
	\label{tab:brake}
\end{table}

Our second example regards the refractive case and consider a non-centred circle; we highlight that, from a computation point of view, the routines used in this paper are different from the ones proposed in \cite{deblasiterraciniellissi} which took into account just centred ellipses.  
Here the comparison has to be made with Theorem 1.1 in \cite{deblasiterraciniellissi}: Figure \ref{fig:xc04trans} shows a high coherence between numerical and theoretical results. We can notice that also in this case the transition from centre to saddle is associated with a pitchfork bifurcation, but here the two-periodic points do not correspond to a brake trajectory any more. This is due to the presence of an outer dynamics which moves the starting points of inner arcs. 

\begin{figure}[h!]
	\includegraphics[width=0.4\textwidth]{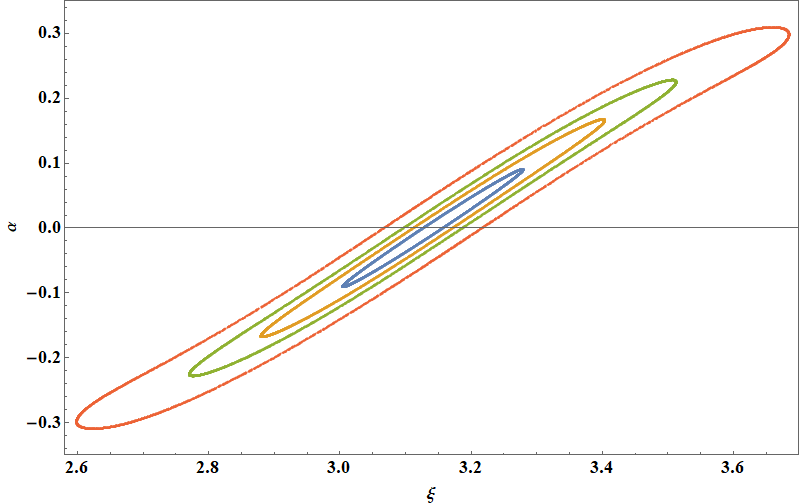} \qquad 
	\includegraphics[width=0.4\textwidth]{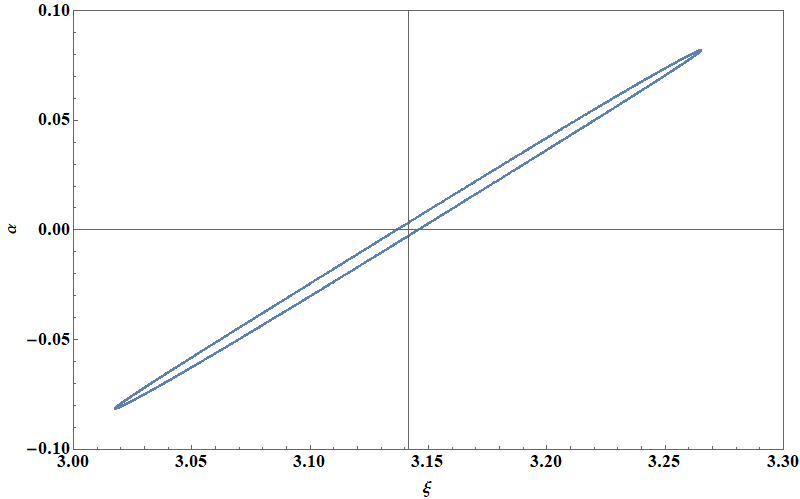} \\
	\includegraphics[width=0.4\textwidth]{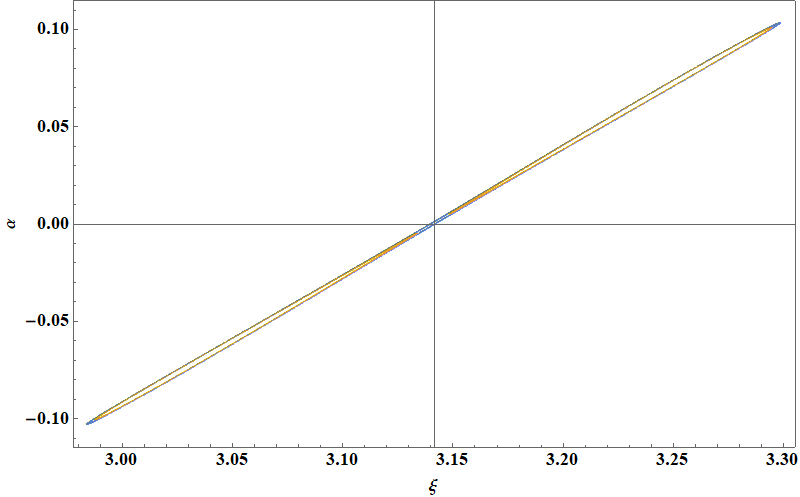} \qquad 
	\includegraphics[width=0.4\textwidth]{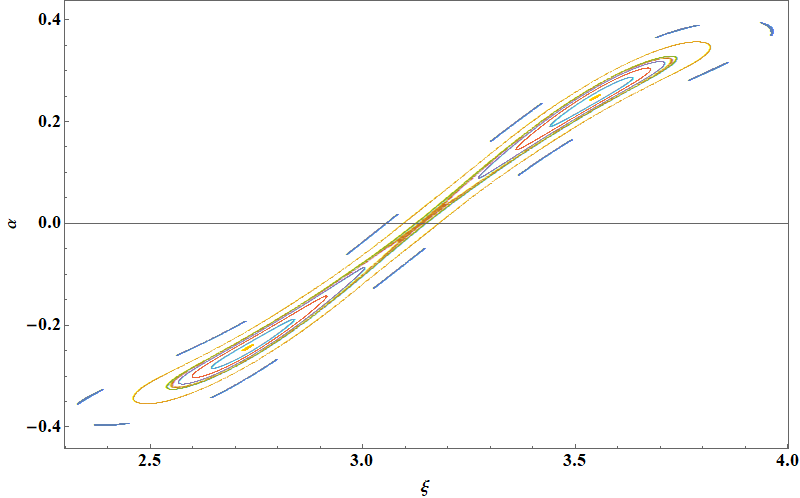} 
	\caption{Phase portrait in a neighbourhood of $(\pi,0)$ for the non-centred circular Keplerian refractive billiard with physical parameters: $x_0=0.4$, $h_E=9$, $\mu=2$, $\omega =1$. With this choice, the bifurcation value is $\tilde h \approx h_E +3.73987$: in the first line the values of the inner energy are $h_I=h_E +3$ and $h_I=h_E +3.69$, in the second one $h_I=h_E +3.79$ and $h_I=h_E +4.5$. It is evident the transition from centre to saddle.}	
	\label{fig:xc04trans}
\end{figure}

\subsection{Evidences of chaos}\label{subsec:num_chaos}
In this section we propose a numerical analysis in the non-admissible case, that means the situations not covered by Theorem \ref{thm:main_lavoro}. Let us outline that this theoretical result does not provide estimates on the energy levels that guarantee chaos: the simulations we propose here can be intended as a complement in this direction. \\
What we can notice is that, whenever integrability is not already guaranteed (focused ellipses in the reflective case and centred circles in both cases), chaotic behaviours appear, at least locally.

For the reflective model we propose simulations in two different domain shapes. The first one concerns a circular billiard whose centre is very close, although not coinciding, to the Keplerian mass. In Figure \ref{fig:cerchio non centrato} we focus our attention near the saddle in $(0,0)$ where we expect to observe the arising of chaos. Let us notice that  very local diffusive phenomena are noticeable already for low energies. 
\begin{figure}[h!]
	\includegraphics[width=0.4\textwidth]{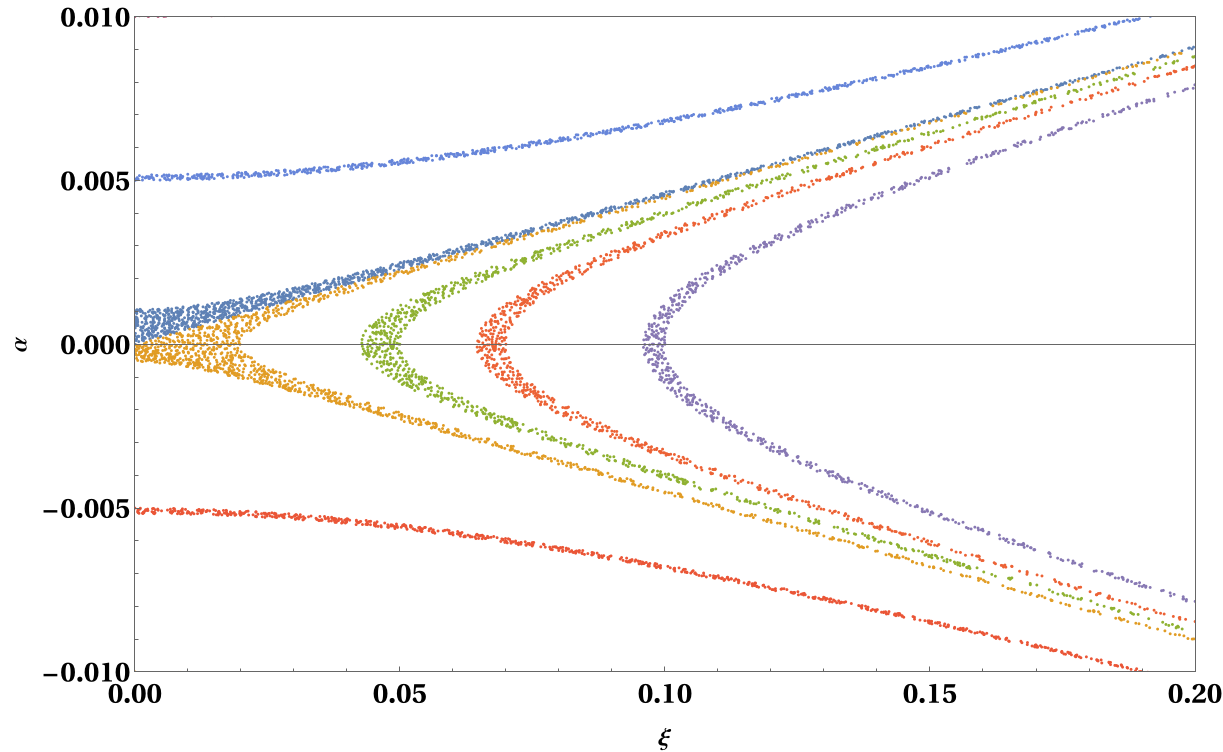} \qquad
	\includegraphics[width=0.4\textwidth]{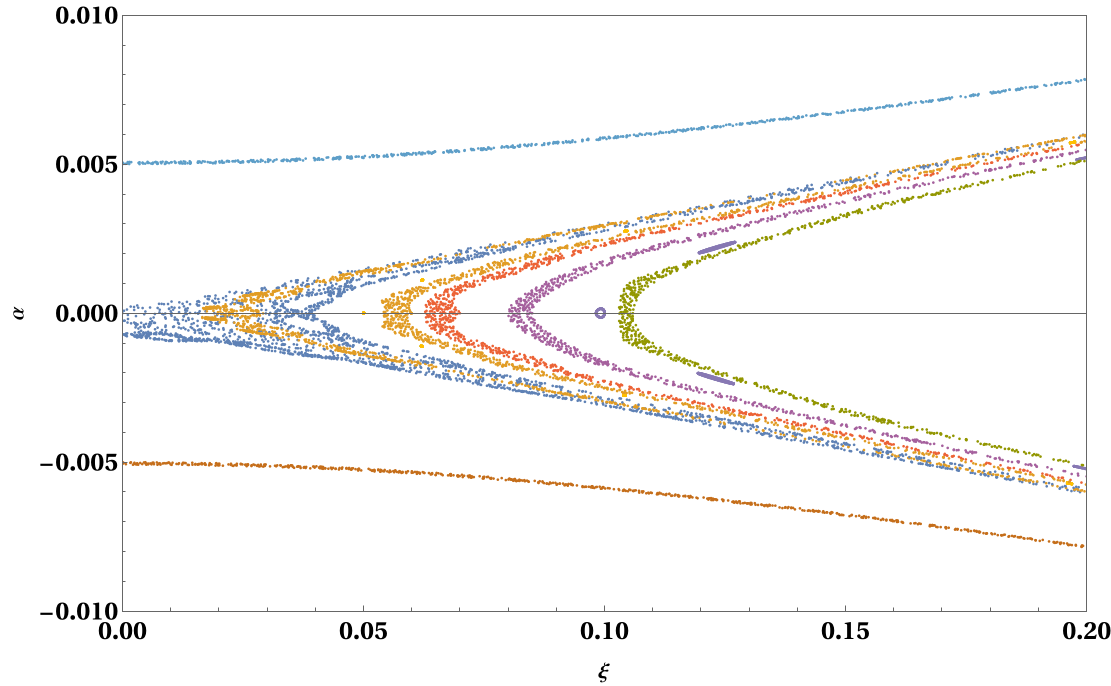} 
	\caption{Phase portrait in a neighbourhood of $(0,0)$ for the non-centred circular Keplerian reflective billiard with physical parameters: $x_0=0.01$, $\mu=2$, $h_I=3$ (left) and $h_I=10$ (right). We note the presence of diffusive orbits near the saddle point with an increasing complexity of the dynamic for higher values of $h_I$.}
	\label{fig:cerchio non centrato}
\end{figure}
As for non focused ellipses, we refer to Figures \ref{fig:ellisse non focused 01_x0.31},  \ref{fig:ellisse non focused sx}. It is particularly interesting that, moving slightly the centre from the focus, very evident chaotic orbits are present, even with low energies. Such diffusive orbits, initially limited near the saddles, cover a larger part of the phase space when the centre is taken further from the focus.  
\begin{figure}[h!]
	\includegraphics[width=0.4\textwidth]{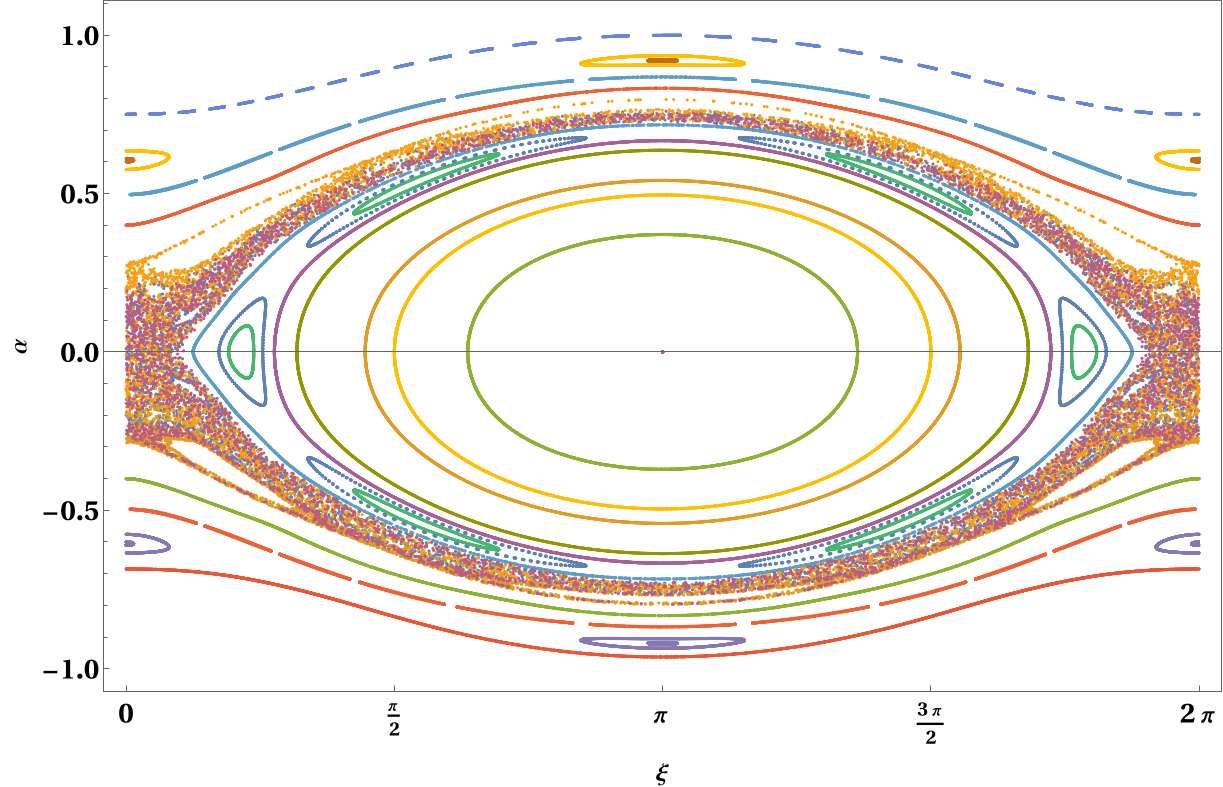} \qquad
	\includegraphics[width=0.4\textwidth]{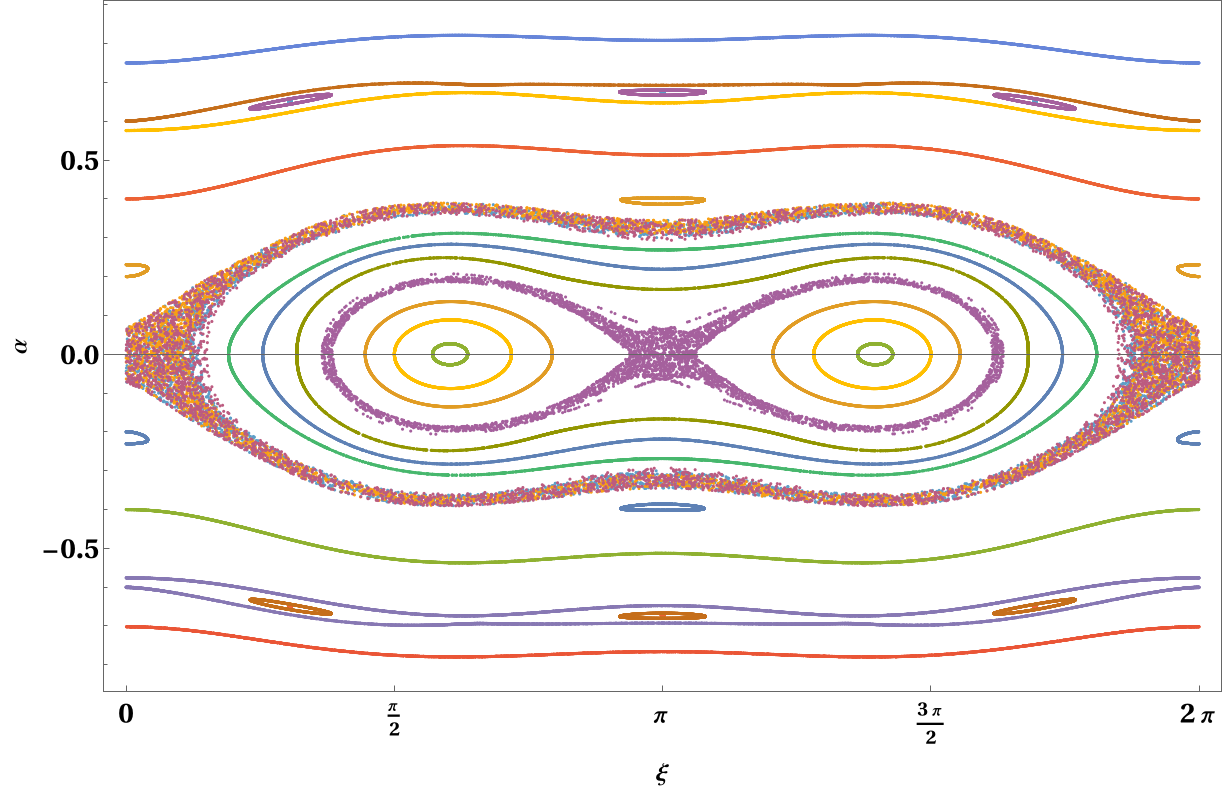} 
	\caption{Phase portrait for the non-focused elliptic Keplerian reflective billiard with physical parameters: $e=0.3$, $x_0=0.31$, $\mu=2$, $h_I=0.1$ (left) and $h_I=10$ (right). A marked chaotic behaviour is evident even for small deviations from the integrable case ($x_0=0.3$) and small inner energies. Compare with Figure \ref{fig:rifl_integ}.}
	\label{fig:ellisse non focused 01_x0.31}
\end{figure}
\begin{figure}[h!]
	\includegraphics[width=0.5\textwidth]{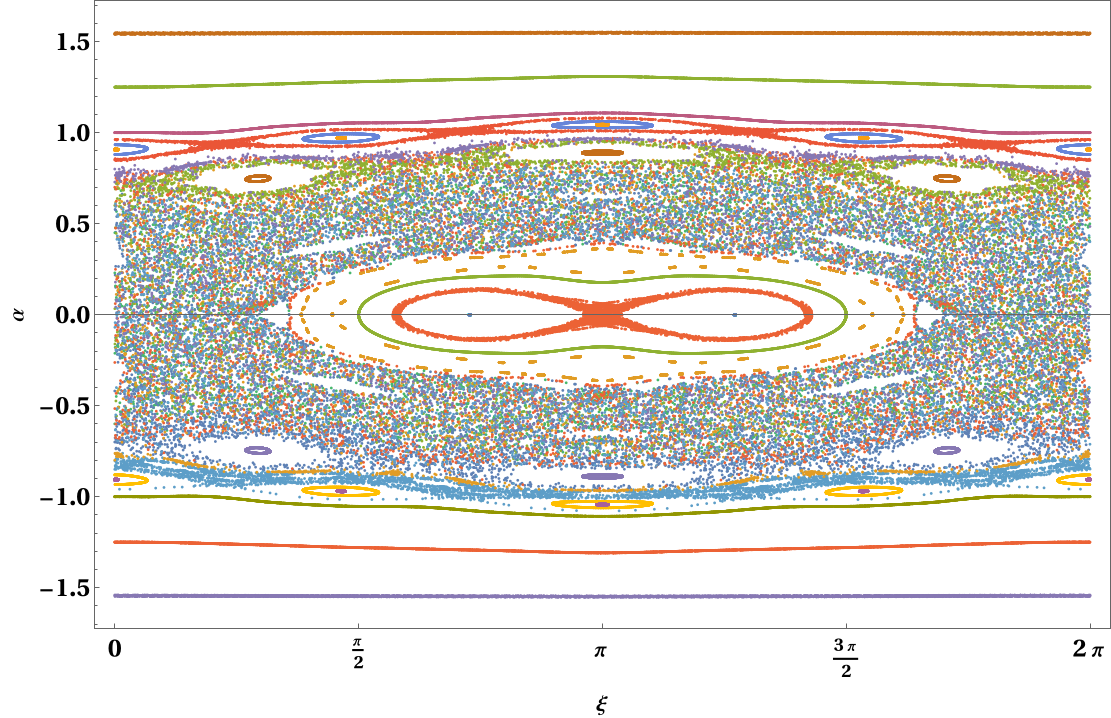} 
	\caption{Phase portrait for the non-focused elliptic Keplerian reflective billiard with physical parameters: $e=0.3$, $x_0=0.4$, $\mu=2$, $h_I=3$.}
	\label{fig:ellisse non focused sx}
\end{figure}

We now investigate the presence of chaos in the refractive case. Here we have an additional problem: the dynamics is not always well defined, due to the presence of the critical angle (see Section \ref{sec:intro}, Eq. \eqref{eq:crit_ang_intro}); as we will see in the following simulations, this will translate in the presence of invariant curves bounding the portion of the phase space in which the dynamics is well defined. Such curves are particularly interesting also because, as we will notice, very local chaotic phenomena can arise in their vicinities. \\
For the refractive case, we refer to Figures \ref{fig:xc002}, \ref{fig:xc01} and \ref{fig:xc04}, which show the behaviour of the first return map for non-centred circles whose centres have coordinates respectively $(0.02, 0)$, $(0.1,0)$ and $(0.4,0)$, and for different energy regimes. We highlight that in all the cases described the domains are non-admissible, and then Theorem \ref{thm:main_lavoro} does not guarantee the presence of chaotic behaviour; moreover, let us recall that the case gets further and further from the centred circle case, which is integrable and highly degenerate (see \cite{IreneSusNew}).  \\
In general, one can observe the presence of chaotic phenomena in almost every regime considered: they became more evident as the inner energy $h_I$ increases or the mass $\mu$ goes further from the centre. In Figure \ref{fig:xc002} the mass is extremely close to the circle's centre: for low energies no diffusive orbits are detectable; nevertheless, taking higher values of $h_I$ (see top right figure), one can start detecting some very local evidences of chaos, associated with the appearance of secondary periodic islands. This is clear when the energy increases (bottom row), where diffusive orbits around the saddle in $(0,0)$ are clearly visible. \\
Taking into account the case $x_0=0.1$ (Figure \ref{fig:xc01}), it is evident the presence of a dynamics which is more and more complex as $h_I$ increases, starting with the presence of secondary periodic island to evidently diffusive orbits. In particular, it is interesting the comparison between Figure \ref{fig:xc002} top right and Figure \ref{fig:xc01} bottom right: in principle, they both represent chaotic phenomena arising on the invariant trajectories bounding the region of the phase space where the refractive first return map is well defined; nevertheless, the dimension of libration islands and corresponding diffusive orbits differs by one order of magnitude. \\
The last example we propose is in Figure \ref{fig:xc04}, and corresponds to the case $x_0=0.4$: here, already for low inner energies, diffusive phenomena are evident. 
\begin{figure}[h]
\centering
\includegraphics[width=0.4\textwidth]{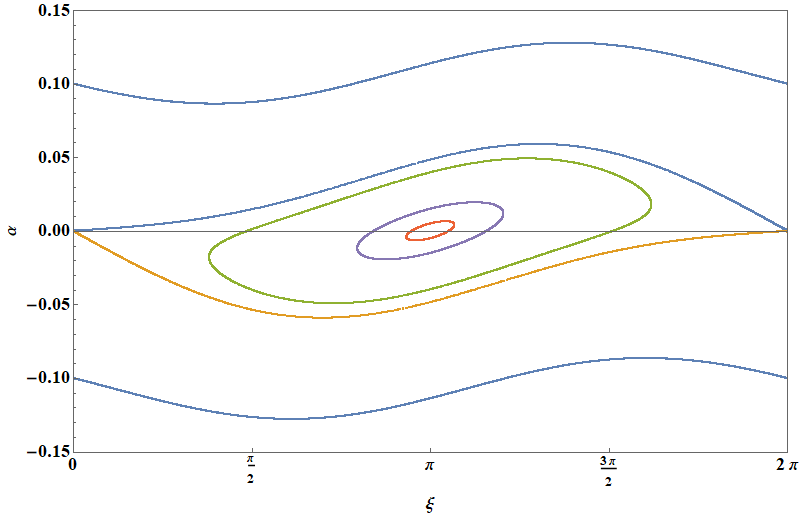} \qquad 
\includegraphics[width=0.4\textwidth]{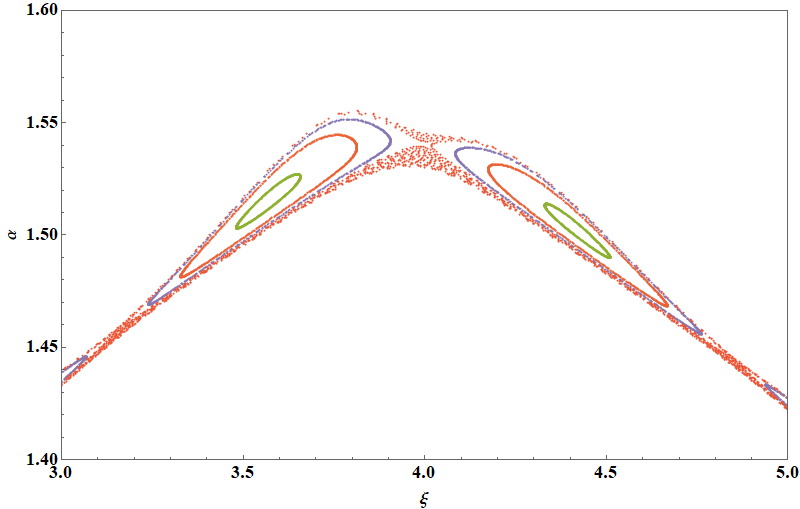} \\
\includegraphics[width=0.4\textwidth]{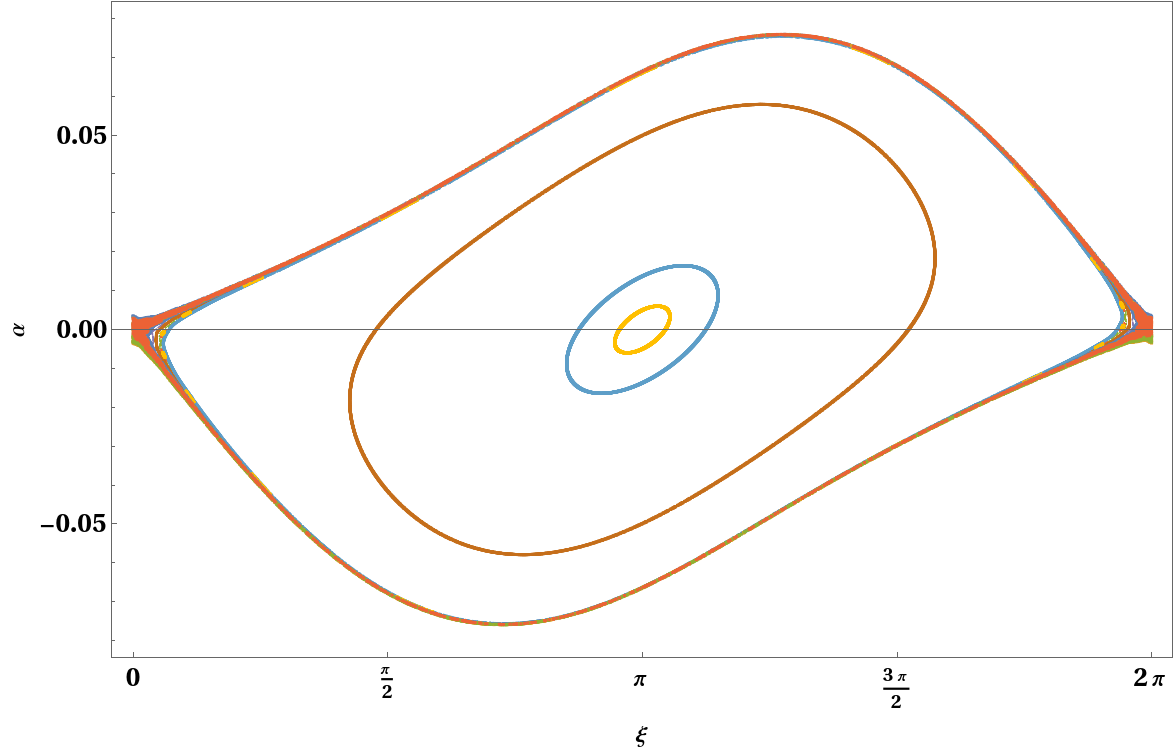} \qquad 
\includegraphics[width=0.4\textwidth]{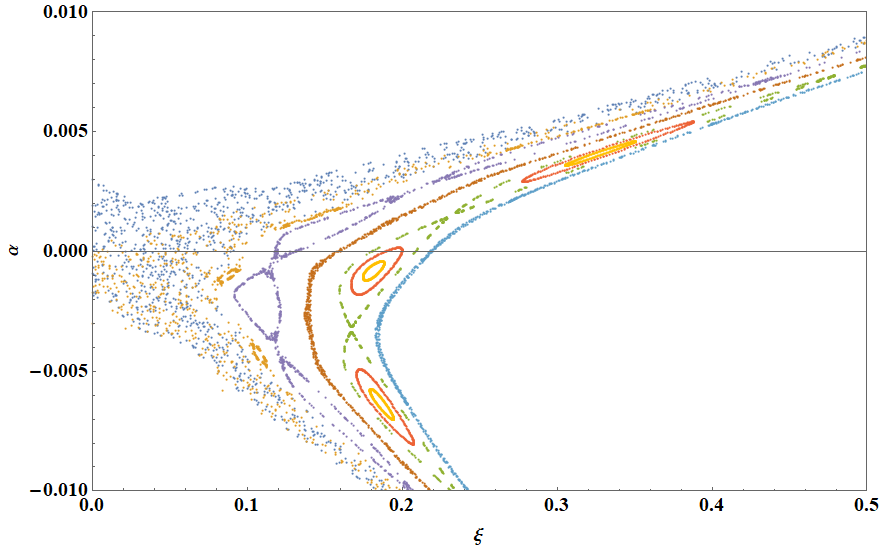}
\caption{Phase portrait for the non-centred circular Keplerian refractive billiard with fixed physical parameters $x_0=0.02$, $h_E=9$, $\mu=2$ and $\omega=1$. In the first line we have on the left a complete phase portrait when $h_I=h_E+3$, while on the right a detail showing an arising of very local diffusive orbits when $h_I=h_E+6$. 
In the second line $h_I=h_E+15$ and the chaos is evident also near the saddle in $(0,0)$.
}
\label{fig:xc002}
\end{figure}

\begin{figure}[h]
	\centering
	\includegraphics[width=0.4\textwidth]{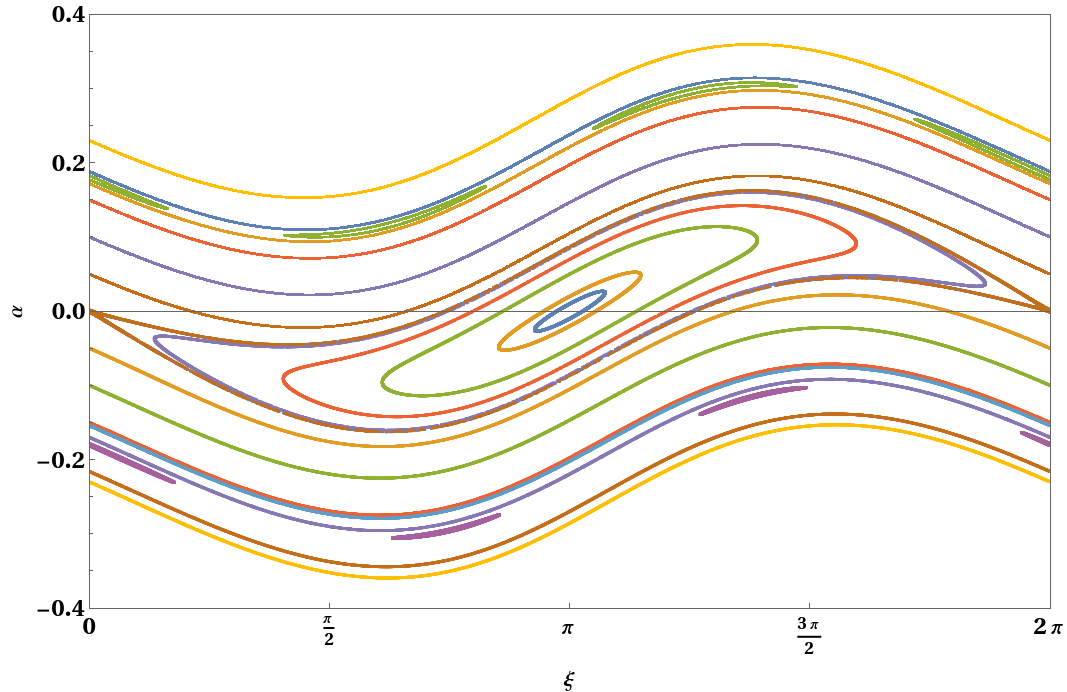} \qquad 
	\includegraphics[width=0.4\textwidth]{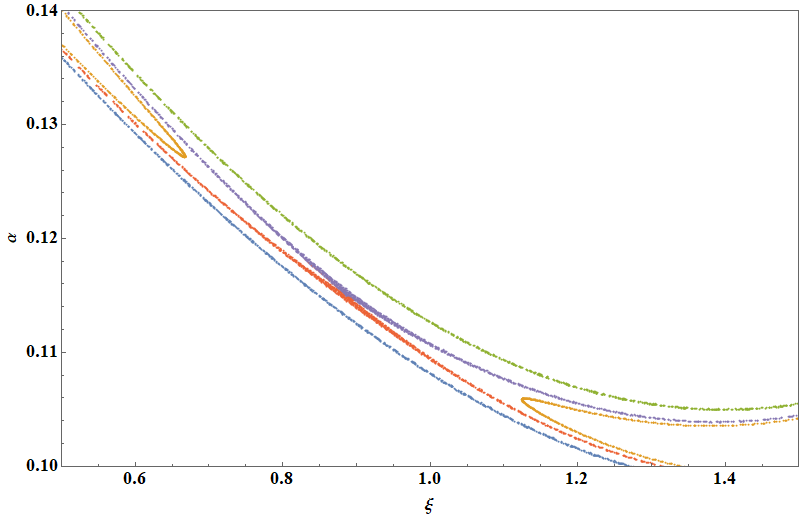} \\
	\includegraphics[width=0.4\textwidth]{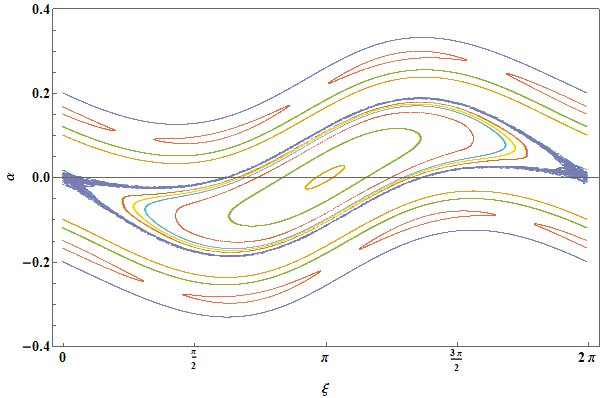} \qquad 
	\includegraphics[width=0.4\textwidth]{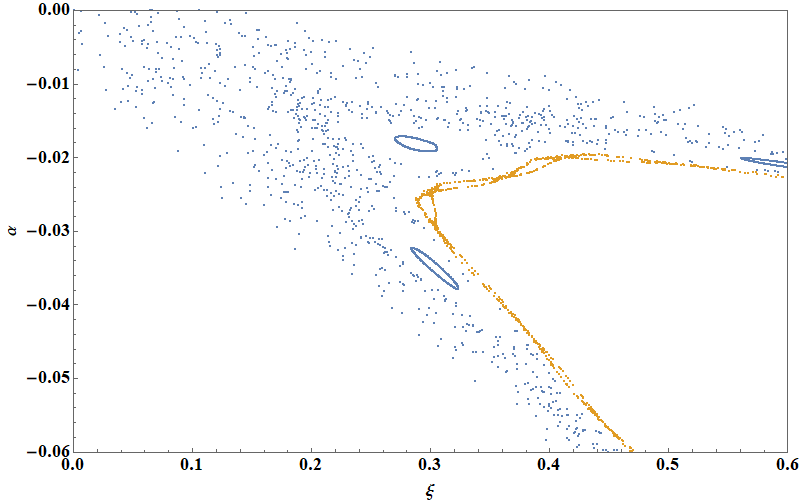} \\
	\includegraphics[width=0.4\textwidth]{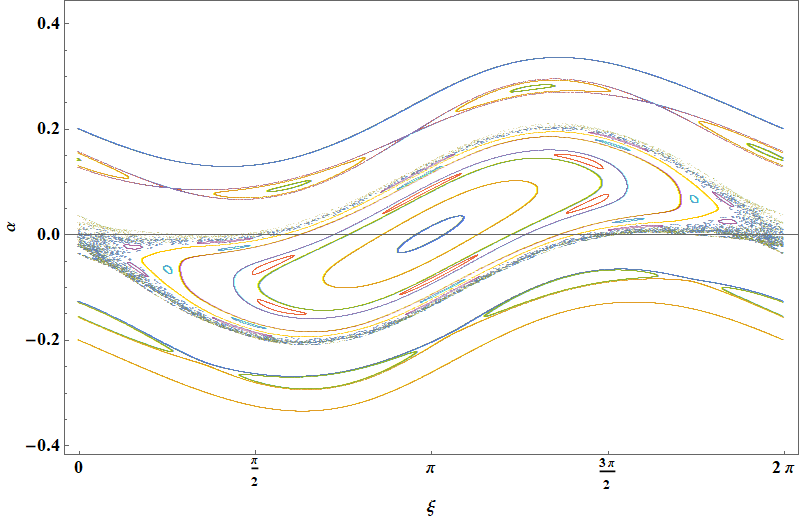} \qquad 
	\includegraphics[width=0.4\textwidth]{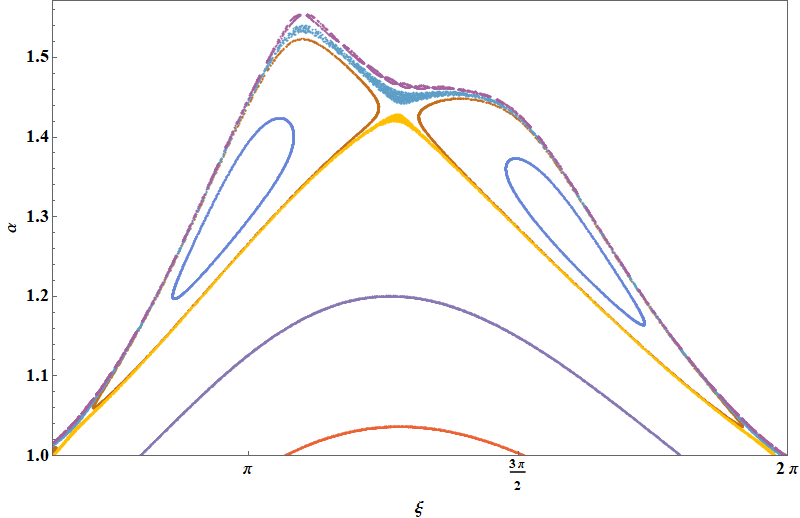}
	\caption{Phase portrait for the non-centred circular Keplerian refractive billiard with fixed physical parameters $x_0=0.1$, $h_E=9$, $\mu=2$ and $\omega=1$. In the first line we have $h_I=h_E+0$ and we note a more complex dynamics due to the presence of secondary islands. 
	In the second line $h_I=h_E+3$ and chaos arises. In the third line $h_I=h_E+6$  the dynamics is more complex; we propose a comparison with the right picture with Figure \ref{fig:xc002} top right.}
	\label{fig:xc01}
\end{figure}

\begin{figure}[h!]
	\includegraphics[width=0.45\textwidth]{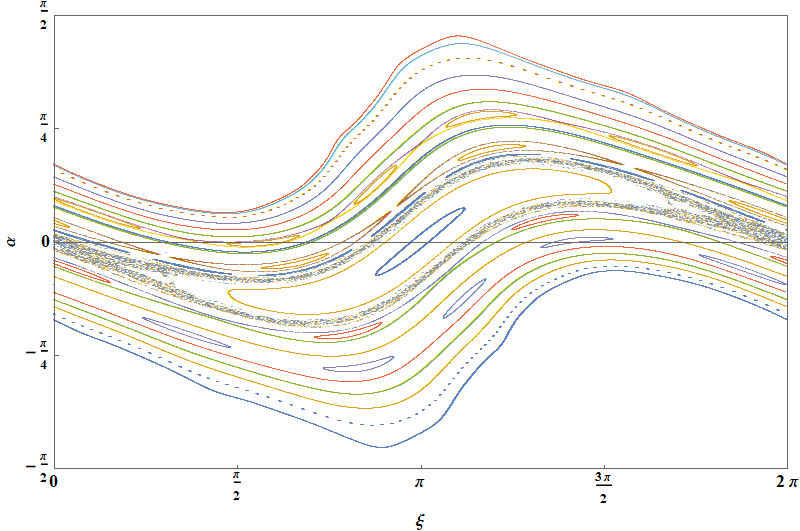}
	\caption{Phase portrait for the non-centred circular Keplerian refractive billiard with physical parameters $x_0=0.4$, $h_E=9$, $\mu=2$ $\omega=1$ $h_I=h_E+0$.}
	\label{fig:xc04}
\end{figure}

Let us now pass to the refractive model: here, except for the focused circular case, no other domains have been proved analytically to lead to an integrable dynamics. It does make sense, then, to start our analysis from the focused elliptic case (which, we remind, is integrable in the reflective case). Figure \ref{fig:refractiveFocused} shows how, in such case, possibly local chaotic phenomena are visible even for very low values of the inner energy and eccentricity. In particular, one can observe how, on the same ellipse, diffusive phenomena are already present locally around the saddle point in $(0,0)$ while, taking higher values of $h_I$, the dynamics is enriched by the presence of secondary islands. \\
We highlight the importance of such numerical result in the framework of refractive billiards: we are in fact showing numerically that, unlike the reflective system, in the refraction case the system is chaotic \emph{even in the case of a focused ellipse}, marking an important difference between the two models. Since diffusion is already evident in the non centred circular case, an even more complex chaotic behaviour is expected in the non-focused elliptic case. In fact, numerical evidences of the onset of chaos in this case is immediately obtained for the same parameters used in Figure \ref{fig:refractiveFocused}. 

\begin{figure}[h!]
	\includegraphics[width=0.4\textwidth]{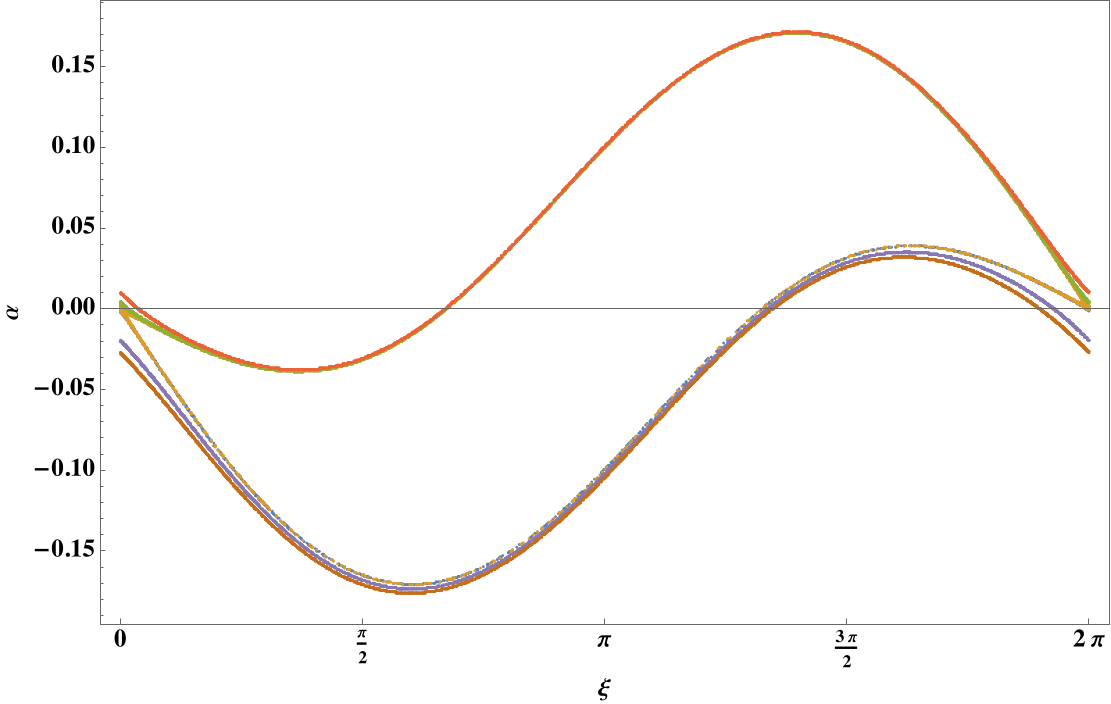}\qquad
	\includegraphics[width=0.4\textwidth]{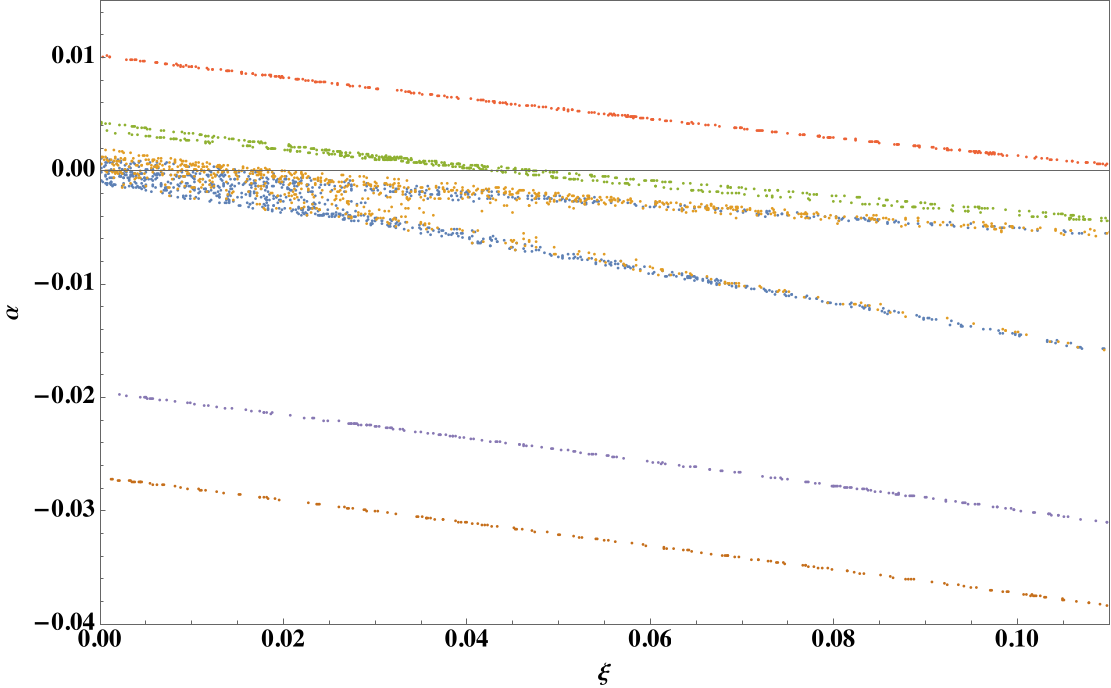}\\
	\includegraphics[width=0.4\textwidth]{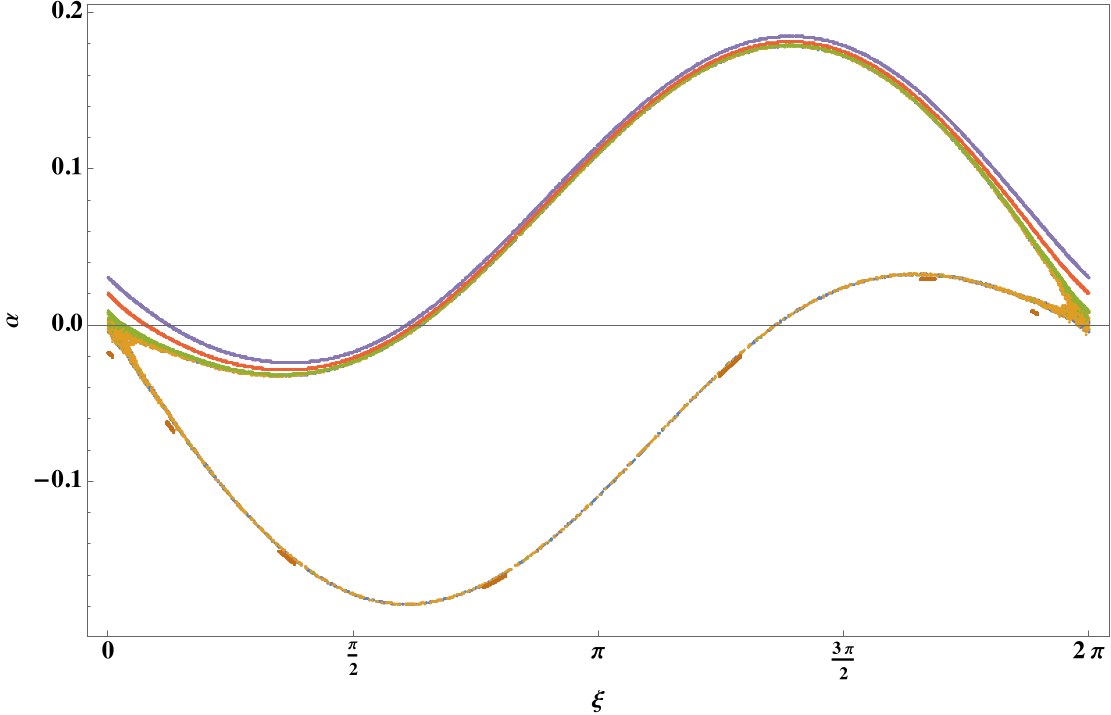}\qquad
	\includegraphics[width=0.4\textwidth]{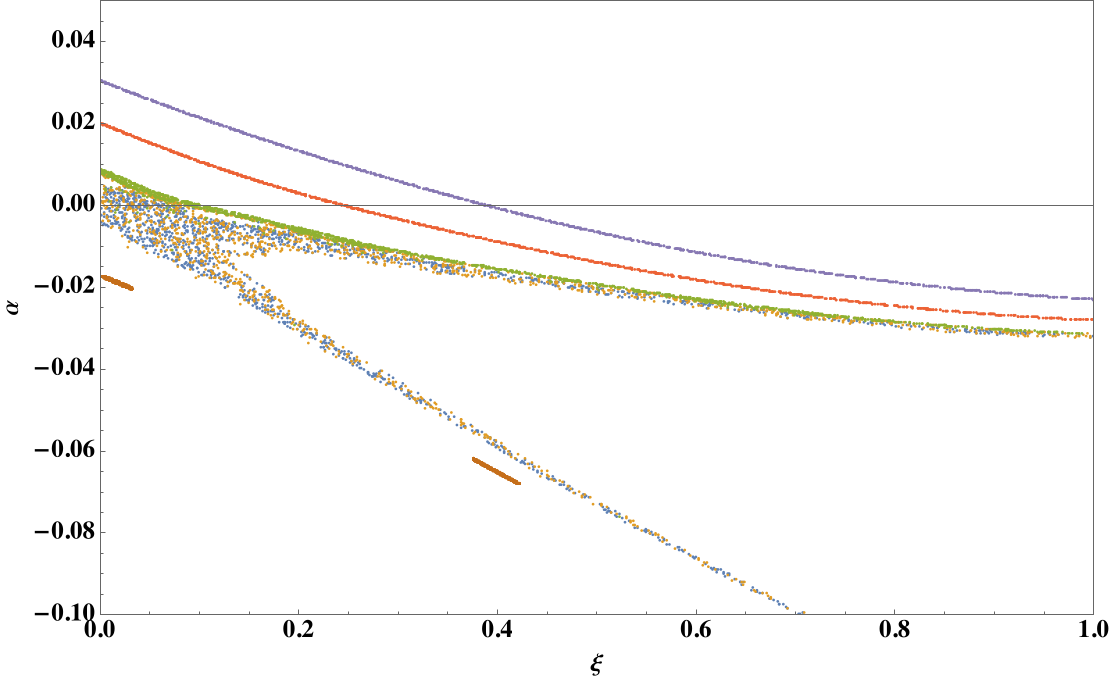}
	\caption{Phase portrait of the first return map in the refractive case, when the domain is a focused ellipse with eccentricity $e=0.1$. The physical parameters are $h_E=9$, $\omega=1$, $\mu=2$, $h_I=h_E+1$ (first row) and $h_I=h_E+2$ (second row). Left: orbits in the whole phase space; right: zoom around the saddle fixed point in $(0,0)$.  }
	\label{fig:refractiveFocused}
\end{figure}

\section{Conclusions}
As already pointed out in Section \ref{sec:intro}, the present paper has the aim to enrich the analysis carried on \cite{deblasiterraciniellissi, IreneSusNew,IreneSusViNEW}, considering in details the reflective Keplerian case as well. The analytical results in Theorems \ref{thm:primo_teorema} and \ref{thm:DeltaKep}, give simple condition to study either  the admissibility of an elliptic boundary or to detect the stability of an homothetic trajectory in the reflective case. They are linked by the fact that, in different ways, both theorems highlight the presence of \emph{bifurcation} phenomena, achieved with very simple changes in the position of the mass inside $D$ or the values of physical parameters. \\
The simulations presented in Section \ref{sec:numerical} have a double scope: first of all, they are useful to explore cases that we are not able, at present, to study analytically. Secondly, they provide actual numerical threshold for the physical and geometrical parameters which allow to observe chaotic behaviour. What we can conclude by observing the above results is that, except for the cases already proved to be integrable (focused ellipse for the reflective system and centred circle for both of them), chaotic phenomena seem to always appear. This fact, non-trivial whenever we are working at low inner energies or inside a non admissible domain, is particularly interesting and motivates our purpose of further analysing such cases, searching for more general analytical conditions that guarantee chaos. 
\newpage

\end{document}